\UseRawInputEncoding
%
%
%


\documentclass{amsart}





\newtheorem{theorem}{Theorem}[section]
\newtheorem{lemma}[theorem]{Lemma}
\newtheorem{proposition}[theorem]{Proposition}

\theoremstyle{definition}
\newtheorem{definition}[theorem]{Definition}
\newtheorem{example}[theorem]{Example}

\theoremstyle{remark}
\newtheorem{remark}[theorem]{Remark}

\def\lu{\underline{\mu}}
\def\ou{\overline{\mu}}
\def\ls{\underline{\sigma}}
\def\os{\overline{\sigma}}
\def\lt{\left}
\def\rt{\right}

\def\vp{\varphi}
\def\ve{\varepsilon}

\def\br{\mathbb{R}}

\def\cp{\mathcal{P}}
\def\cf{\mathcal{F}}

\def\cf{\mathcal{F}}

\def\F{\mathcal{F}}

\def\lu{\underline{\mu}}
\def\ou{\overline{\mu}}
\def\ls{\underline{\sigma}}
\def\os{\overline{\sigma}}

\def\be{\mathbb{E}^{\cp}}

\def\EE{\mathbb{E}^{\cp}}
\def\EEP{\mathbb{E}^{\bar{\cp}}}

\def\vp{\varphi}
\def\ve{\varepsilon}

\numberwithin{equation}{section}

\begin{document}

\title[Pseudo-independence, independence and related limit theorems]{Pseudo-independence, independence and related limit theorems under sublinear expectations}


\author{Li Xinpeng}
\address{Research Center for Mathematics and Interdisciplinary Sciences, Shandong University, 266237, Qingdao, China;
School of Mathematics, Shandong University, 250100, Jinan, China.}
\curraddr{}
\email{lixinpeng@sdu.edu.cn}
\thanks{This work was supported by National Key R\&D Program of China (No.2018YFA0703900), NSF of China (No.11601281) and the Young Scholars Program of Shandong University. The author gratefully acknowledges the many helpful suggestions of Prof. Shige PENG during the preparation of the paper.}


\subjclass[2020]{Primary 60A05; Secondary 60F05}

\date{}

\dedicatory{}


\begin{abstract}
This paper introduces the notion of pseudo-independence on the sublinear expectation space $(\Omega,\mathcal{F},\cp)$ via the classical conditional expectation, and the relations between pseudo-independence and Peng's independence are detailed discussed. Law of large numbers and central limit theorem for pseudo-independent random variables are obtained, and a purely probabilistic proof of Peng's law of large numbers is also given. In the end, some relevant counterexamples are indicated that Peng's law of large numbers and central limit theorem are invalid only with the first and second moment conditions respectively.
\end{abstract}

\maketitle

\section{Introduction}
The notion of independence is very important in the classical probability theory, especially in the limit theorems, which was builded on the probability space $(\Omega,\mathcal{F},P)$, where the classical probability $P$ is additive. However, in some
areas this apparently natural property has been abandoned. Non-additive probabilities or nonlinear expectations as the useful tools for studying uncertainties have received more and more attention in many fields, such as mathematical economics, statistics, quantum mechanics and finance. One typical example of nonlinear expectations is the sublinear expectation, Peng \cite{pengbook} systematically studied the sublinear expectation theory and related stochastic analysis. In particular, the new notions of independence and identical distribution (i.i.d. for short) are introduced in Peng \cite{pengbook} and the related law of large numbers (LLN for short) and central limit theorem (CLT for short) under sublinear expectation are obtained. These notions and theorems are builded on the sublinear expectation space $(\Omega,\mathcal{H},\mathbb{E})$, where $\mathcal{H}$ is a linear space of random variables satisfying certain ``closure property'' for a class of continuous functions and $\mathbb{E}$ is a sublinear functional on $\mathcal{H}$.

In most situations, the sublinear expectation can be represented as the supremum of a set $\cp$ of linear expectations, i.e.,
$\EE[X]=\sup_{P\in\cp}E_P[X]$, see Denis et al. \cite{Peng2011} or Peng \cite{pengbook}. The sublinear expectation space of this paper is slightly different from the original one in Peng \cite{pengbook}. We consider $(\Omega,\mathcal{F},\cp)$ as the sublinear expectation space, where $\cp$ is a set of probability measures on the measurable space $(\Omega,\mathcal{F})$. Following Peng's idea for the notion of independence under sublinear expectation space $(\Omega,\mathcal{H},\mathbb{E})$, we reconsider this notion on sublinear expectation space $(\Omega,\mathcal{F},\cp)$, in particular, the set $\cp$ can be constructed such that Peng's independence also holds under the sublinear expectation $\EE$ introduced by $\cp$. We first introduce the notion of pseudo-independence under sublinear expectation $\EE$ via conditional expectations. Then we can enlarge $\cp$ to $\bar{\cp}$ such that pseudo-independent sequence becomes the independent one under $\bar{\cp}$. Thanks to the existing results of LLN and CLT, we can easily obtain the corresponding LLN and CLT for pseudo-independent sequence. In particular, a purely probabilistic proof of Peng's LLN for i.i.d. sequence is given. But unlike the classical limit theorems, LLN and CLT for i.i.d. sequence may fail only with the finite first and second moment conditions respectively.

The remainder of this paper is organized as follows. Section 2 describes some basic concepts and notations of the sublinear expectation theory. In Section 3, the properties of pseudo-independent sequence are discussed. LLN and CLT for pseudo-independent random variables are proved in Section 4 and Section 5.  The relevant counterexamples of LLN and CLT are provided in Section 6.

\section{Preliminaries}

Given a complete separable metric space $\Omega$, let $\mathcal{F}$ be the Borel $\sigma$-algebra of $\Omega$ and $\mathcal{M}$ be the collection of all probability measures on $(\Omega, \mathcal{F})$. For a subset $\mathcal{P} \subset \mathcal{M}$, the upper expectation of $ \mathcal{P} $ is defined as follows: for each random variable $ X $,
$$
\EE [X]:=\sup _{P \in \mathcal{P}} E_{P}[X].
$$
It is easy to verify that $\EE$ is a sublinear expectation, i.e., $\EE$ satisfies following properties:
\begin{itemize}
\item[(\romannumeral1)] Monotonicity: $\EE[X] \leq \EE[Y],$ if $X \leq Y$,
\item[(\romannumeral2)] Constant preserving: $\EE[c]=c, \forall c \in \mathbb{R}$,
\item[(\romannumeral3)] Sub-additivity: $\EE[X+Y] \leq \EE[X]+\EE[Y]$,
\item[(\romannumeral4)] Positive homogeneity: $\EE[\lambda X]=\lambda \EE[X], \forall \lambda \geq 0$.
\end{itemize}

For each $A\in\mathcal{F}$, the upper and lower probabilities are defined corresponding to the set of probability measures $\mathcal{P}$ respectively as follows:
$$V(A):= \sup _{P \in \mathcal{P}} P(A)$$
$$v(A):= \inf _{P \in \mathcal{P}} P(A).$$
Obviously, $V(\cdot)$ and $v(\cdot)$ are conjugate to each other, that is
$$
V(A)+v\left(A^{c}\right)=1.
$$

The following notions of identical distribution and independence on the sublinear expectation space are initiated by Peng \cite{Peng2007b} (see also Peng \cite{pengsur,pengbook}).

\begin{definition}\label{d1}
Let $X$ and $Y$ be two random variables on sublinear expectation spaces $(\Omega_1, \mathcal{F}_1, {\cp_1})$ and $(\Omega_2, \mathcal{F}_2, {\cp_2})$ respectively. $X$ and $Y$ are called identically distributed, if for each $\varphi \in C_{b, {Lip}}(\mathbb{R})$,
$$
\mathbb{E}^{\cp_1}[\varphi(X)]=\mathbb{E}^{\cp_2}[\varphi(Y)],
$$
where $C_{b,Lip}(\mathbb{R})$ is the set of all bounded and Lipschitz functions on $\br$.
\end{definition}

It is worth pointing out that if $X$ and $Y$ are identical distributed by Definition \ref{d1}, we can not obtain that $V_1(X\in A)=V_2(Y\in A), \forall A\in\mathcal{B}(\br)$ in general, where $V_i$ is introduced by $\cp_i, i=1,2.$

\begin{example}\label{ex2}
Let $(\Omega,\mathcal{F})=([0,1],\mathcal{B}[0,1])$, $\cp_1=\{\delta_x, x\in[0,1)\}$, $\cp_2=\{\delta_x, x\in[0,1]\}$, where $\delta_x$ is Dirac measure at $x$. Consider the random variables $X$ and $Y$ defined by $X(\omega)=Y(\omega)=\omega$. Then for each continuous function $\vp\in C_{b,Lip}(\br)$,
$$\mathbb{E}^{\cp_1}[\vp(X)]=\max_{x\in[0,1]}\vp(x)=\mathbb{E}^{\cp_2}[\vp(Y)],$$
but we have
$$V_1(X=1)=0\neq 1=V_2(X=1).$$
\end{example}
In particular, if we further assume that both  $\cp_1$ and $\cp_2$ are weakly compact, we can deduce that $V_1(X\in A)=V_2(Y\in A), \ \forall A\in\mathcal{B}(\br)$, see Proposition 14 in Hu and Peng \cite{HP} or Proposition 2.8 in Li \cite{li2013}.

\begin{definition}
\label{PI}
Let $\left\{X_{n}\right\}_{n=1}^{\infty}$ be a sequence of random variables on $(\Omega, \mathcal{F}, \cp)$.  $X_{n}$ is said to be independent of $\left(X_{1}, \cdots, X_{n-1}\right)$ under $\cp$, if for each $\varphi \in C_{b, \operatorname{Lip}}\left(\mathbb{R}^{n}\right)$
$$
\EE\left[\varphi\left(X_{1}, \cdots, X_{n}\right)\right]=\EE\left[\left.\EE\left[\varphi\left(x_{1}, \cdots, x_{n-1}, X_{n}\right)\right]\right|_{\left(x_{1}, \cdots, x_{n-1}\right)=\left(X_{1}, \cdots, X_{n-1}\right)}\right].
$$
The sequence of random variables $\left\{X_{n}\right\}_{n=1}^\infty$ is said to be independent, if $X_{n+1}$ is independent of $\left(X_{1}, \cdots, X_{n}\right)$ for each $n \in \mathbb{N}$.
\end{definition}
In the sublinear expectation space, there  also exists $G$-normal distribution which plays an important role as an analogue of the normal distribution in the classical probability theory.
\begin{definition}
A random variable $\xi$ on a sublinear expectation space
$(\Omega,\mathcal{F},\cp)$ is called
$G$-normal distributed, denoted by
$\xi\sim\mathcal{N}(0;[\ls^2,\os^2])$, for a given pair
$0\leq\ls\leq\os<\infty$, if for each $\vp\in C_{b,Lip}(\br)$, the following
function defined by
$$u(t,x)=\EE[\vp(x+\sqrt{t}\xi)],(t,x)\in[0,\infty)\times\br,$$
is the unique viscosity solution of the following parabolic partial
differential equation defined on $[0,\infty)\times\br$:
$$\partial_tu-G(\partial^2_{xx}u)=0, u|_{t=0}=\vp,$$
where
$$G(\alpha)=\frac{1}{2}(\os^2\alpha^+-\ls^2\alpha^-),\alpha\in\br.$$
\end{definition}
In particular, if $\os=\ls$, then $\xi$ defined above is the classical normal distribution.

\section{Properties of pseudo-independent random variables}

Recently, Guo and Li \cite{GL} (see also Li \cite{li2013}) introduced the concept of pseudo-independence, which characterizes the relations between classical conditional expectation and sublinear expectation.
\begin{definition}
Let $\left\{X_{n}\right\}_{n=1}^{\infty}$ be a sequence of random variables on $(\Omega, \mathcal{F}, \cp)$. Define $$\mathcal{F}_{n}:=\sigma\left(X_{1}, \cdots, X_{n}\right), n\geq 1 \quad \text{and} \quad \mathcal{F}_{0}:=\{\emptyset, \Omega\}.$$ If for each $P \in \mathcal{P}$, we have
\begin{equation}
\label{p-ind}
E_{P}\left[ \varphi \lt(X_{n}\rt) \mid \mathcal{F}_{n-1}\right] \leq \EE\left[\varphi \lt(X_{n}\rt)\right] \quad P\text{-a.s.},\quad \forall \varphi \in C_{b, \operatorname{Lip}}(\mathbb{R}),
\end{equation}
then we call that $X_{n}$ is pseudo-independent of $\left(X_{1}, \cdots, X_{n-1}\right)$.
\end{definition}

The pseudo-independent sequence $\{X_n\}_{n=1}^\infty$ can be defined in the same manner as the independent sequence.

\begin{remark}
It is worth pointing out that (\ref{p-ind}) is equivalent to
$$-\EE\left[-\varphi \lt(X_{n}\rt)\right]\leq E_{P}\left[ \varphi \lt(X_{n}\rt) \mid \mathcal{F}_{n-1}\right] \leq \EE\left[\varphi \lt(X_{n}\rt)\right],  \ \forall \varphi \in C_{b, \operatorname{Lip}}(\mathbb{R}).$$ In particular, if $\mathcal{P}=\{P\}$ is a singleton set, as shown in the classical probability theory, $X$ being independent of the $\sigma$-field $\mathcal{F}$ is equivalent to $E_P[\vp(X)|\mathcal{F}]=E_P[\vp(X)]$ for each test function $\vp\in C_{b,Lip}(\mathbb{R})$. However, pseudo-independence means that the conditional expectation $E[\vp(X)|\mathcal{F}]$ may not exactly equal to the unconditional expectation $E_P[\vp(X)]$, but lie in a non-random interval depending on $\vp$.
\end{remark}

Here is an important property of these concepts, which has been proved in Guo and Li \cite{GL}.
\begin{proposition}
\label{conditional expt ineq}
An independent sequence of random variables $\left\{X_{n}\right\}_{n=1}^{\infty}$ on the sublinear expectation space $(\Omega, \mathcal{F}, \cp)$ is pseudo-independent.
\end{proposition}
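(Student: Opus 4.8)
The plan is to fix an arbitrary $P \in \cp$, a test function $\vp \in C_{b,Lip}(\br)$ and $n \ge 1$, and to verify the defining inequality (\ref{p-ind}) by a conditioning argument. Writing $c := \EE[\vp(X_n)]$, which is a fixed real number, I would first reduce to a statement about expectations against conditioning events: by the standard characterization of conditional expectation, $E_P[\vp(X_n) \mid \mathcal{F}_{n-1}] \le c$ holds $P$-a.s. if and only if
$$E_P\big[(\vp(X_n) - c)\,\mathbf{1}_A\big] \le 0 \qquad \text{for every } A \in \mathcal{F}_{n-1}.$$
The lower bound recorded in the Remark then follows by applying the whole argument to $-\vp$ in place of $\vp$.

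The key step is to establish this inequality with $\mathbf{1}_A$ first replaced by a nonnegative smooth test function. Let $\psi \in C_{b,Lip}(\br^{n-1})$ with $\psi \ge 0$, and set $\Phi(x_1,\dots,x_n) := \psi(x_1,\dots,x_{n-1})\big(\vp(x_n) - c\big)$, which again lies in $C_{b,Lip}(\br^{n})$. For frozen $(x_1,\dots,x_{n-1})$ the coefficient $\psi(x_1,\dots,x_{n-1})$ is a nonnegative constant, so positive homogeneity together with the constant-translation property of $\EE$ give
$$\EE\big[\Phi(x_1,\dots,x_{n-1},X_n)\big] = \psi(x_1,\dots,x_{n-1})\,\EE\big[\vp(X_n) - c\big] = \psi(x_1,\dots,x_{n-1})\big(c - c\big) = 0.$$
Hence the inner conditional term is the zero function, and the independence hypothesis in Definition \ref{PI} yields $\EE[\Phi(X_1,\dots,X_n)] = \EE[0] = 0$. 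Since $E_P \le \EE$, this forces $E_P[(\vp(X_n)-c)\,\psi(X_1,\dots,X_{n-1})] \le 0$ for every such $\psi$.

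It then remains to pass from nonnegative $\psi \in C_{b,Lip}(\br^{n-1})$ to an arbitrary $A \in \mathcal{F}_{n-1}$. By the Doob--Dynkin lemma $A = \{(X_1,\dots,X_{n-1}) \in B\}$ for some Borel $B \subseteq \br^{n-1}$; writing $\mu_P$ for the law of $(X_1,\dots,X_{n-1})$ under $P$, I would approximate $\mathbf{1}_B$ in $L^1(\mu_P)$ by nonnegative bounded Lipschitz functions truncated to take values in $[0,1]$, extract a $\mu_P$-a.e. convergent subsequence $\psi_k \to \mathbf{1}_B$, and invoke dominated convergence (legitimate because $\vp$ is bounded) to obtain $E_P[(\vp(X_n)-c)\,\mathbf{1}_A] = \lim_k E_P[(\vp(X_n)-c)\,\psi_k(X_1,\dots,X_{n-1})] \le 0$. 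This is exactly the criterion of the first paragraph, so $E_P[\vp(X_n)\mid\mathcal{F}_{n-1}] \le \EE[\vp(X_n)]$ $P$-a.s.; as $P \in \cp$ and $\vp \in C_{b,Lip}(\br)$ were arbitrary, $X_n$ is pseudo-independent of $(X_1,\dots,X_{n-1})$.

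The one point that genuinely requires care is the interplay between positive homogeneity and the sign of the multiplier. The collapse of the inner expectation to $0$ relies on $\psi \ge 0$, so one can only test directly against nonnegative functions and must recover general conditioning events afterwards by approximation. Crucially, subtracting the constant $c = \EE[\vp(X_n)]$ at the outset is what makes the inner term vanish \emph{exactly} rather than merely be bounded; without this reduction the right-hand side would inflate from $E_P[\psi]$ to $\EE[\psi]$ and the argument would not close. The approximation of the indicator is routine, but it is the step where weak compactness of $\cp$ is genuinely \emph{not} needed, since everything is carried out measure by measure.
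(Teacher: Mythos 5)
Your proof is correct. The paper itself gives no argument for this proposition --- it simply defers to Guo and Li \cite{GL} --- and your write-up is essentially the standard proof used there: test independence against $\Phi(x_1,\dots,x_n)=\psi(x_1,\dots,x_{n-1})\bigl(\vp(x_n)-\EE[\vp(X_n)]\bigr)$ with $\psi\geq 0$, so that positive homogeneity collapses the inner expectation to $0$, then pass from smooth nonnegative $\psi$ to indicators of sets in $\mathcal{F}_{n-1}$ by $L^1(\mu_P)$ approximation. All the delicate points (the need for $\psi\geq 0$, subtracting the constant $c=\EE[\vp(X_n)]$ before invoking homogeneity, and working measure by measure so no compactness of $\cp$ is needed) are correctly identified and handled; the only simplification available is that $L^1(\mu_P)$ convergence of $\psi_k\to\mathbf{1}_B$ already suffices, since $\vp$ is bounded, without extracting an a.e.\ convergent subsequence.
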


Pseudo-independence presents more extensive application scenarios than the independence in sublinear expectation space proposed earlier. For instance, independence under each probability $P$ of $\cp$ apparently implies pseudo-independence, but usually can not guarantee independence in Definition \ref{PI}, see Example \ref{ex} in the end of this section. In general, we have the following properties.
\begin{proposition}
\label{prop5}
Let $\{X_n\}_{n=1}^\infty$ be a pseudo-independent sequence under $\cp$, then for each $n\in\mathbb{N}$, $\forall \vp\in C_{b,Lip}(\mathbb{R}^{n+1})$,
$$\EE[\vp(X_1, \cdots, X_n,X_{n+1})]\leq \EE[\EE[\vp(x_1,\cdots,x_n,X_{n+1})]|_{(x_1,\cdots,x_n)=(X_1,\cdots,X_n)}].$$
\end{proposition}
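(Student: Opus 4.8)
The plan is to condition on $\mathcal{F}_n=\sigma(X_1,\dots,X_n)$ under each fixed $P\in\cp$, apply pseudo-independence after freezing the first $n$ coordinates, and then pass to the supremum over $\cp$. First I would put $\psi(x_1,\dots,x_n):=\EE[\vp(x_1,\dots,x_n,X_{n+1})]$, so that the right-hand side of the claim is $\EE[\psi(X_1,\dots,X_n)]$. Since $\vp$ is bounded and Lipschitz, each map $x\mapsto E_P[\vp(x,X_{n+1})]$ is Lipschitz with the same constant, and a supremum of uniformly Lipschitz functions is Lipschitz; hence $\psi\in C_{b,Lip}(\br^n)$ and $\EE[\psi(X_1,\dots,X_n)]$ is well defined.

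Next I would compute the inner conditional expectation via the substitution (freezing) property. Because $\Omega$ is Polish, for fixed $P$ there is a regular conditional distribution $\mu_P(\omega,dw)$ of $X_{n+1}$ given $\mathcal{F}_n$, and since $X_1,\dots,X_n$ are $\mathcal{F}_n$-measurable,
$$E_P[\vp(X_1,\dots,X_n,X_{n+1})\mid\mathcal{F}_n](\omega)=\int_{\br}\vp(X_1(\omega),\dots,X_n(\omega),w)\,\mu_P(\omega,dw)\quad P\text{-a.s.}$$
For each fixed vector $x=(x_1,\dots,x_n)$ the map $w\mapsto\vp(x,w)$ lies in $C_{b,Lip}(\br)$, so pseudo-independence applied to this single-variable test function gives $\int_{\br}\vp(x,w)\,\mu_P(\omega,dw)\le\EE[\vp(x,X_{n+1})]=\psi(x)$ for $P$-a.e. $\omega$.

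The hard part is that the exceptional null set in this last inequality a priori depends on $x$, whereas I need the bound at the random argument $x=(X_1(\omega),\dots,X_n(\omega))$. To remedy this I would first impose the inequality only for $x$ in the countable dense set $\mathbb{Q}^n$, whose union of null sets is still $P$-null, and then extend to all $x\in\br^n$ by continuity: both $x\mapsto\int_{\br}\vp(x,w)\,\mu_P(\omega,dw)$ and $\psi$ are Lipschitz, so off a single $P$-null set the inequality holds for every $x$ simultaneously. Substituting $x=(X_1(\omega),\dots,X_n(\omega))$ and invoking the freezing identity then yields
$$E_P[\vp(X_1,\dots,X_n,X_{n+1})\mid\mathcal{F}_n]\le\psi(X_1,\dots,X_n)\quad P\text{-a.s.}$$

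Finally I would take $E_P$ of both sides, use the tower property on the left, and pass to the supremum over $P\in\cp$:
$$E_P[\vp(X_1,\dots,X_{n+1})]=E_P\big[E_P[\vp(X_1,\dots,X_{n+1})\mid\mathcal{F}_n]\big]\le E_P[\psi(X_1,\dots,X_n)]\le\EE[\psi(X_1,\dots,X_n)],$$
and taking $\sup_{P\in\cp}$ on the left gives exactly $\EE[\vp(X_1,\dots,X_n,X_{n+1})]\le\EE[\psi(X_1,\dots,X_n)]$, which is the asserted inequality. The only delicate points are the existence of the regular conditional distribution, guaranteed by the Polish assumption on $\Omega$ from Section 2, and the removal of the $x$-dependent null set through the density-plus-continuity argument.
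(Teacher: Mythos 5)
Your proposal is correct and follows essentially the same route as the paper's proof: rewrite $\EE$ as $\sup_{P\in\cp}E_P$, use the tower property and the freezing identity to reduce to the conditional expectation given $\mathcal{F}_n$, bound it by $\EE[\vp(x,X_{n+1})]$ via pseudo-independence applied to the one-variable sections, and pass to the supremum. The paper states this as a four-line chain of (in)equalities without justifying the freezing step or the $x$-dependent null set; your density-plus-Lipschitz-continuity argument supplies exactly the detail the paper leaves implicit.
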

\begin{proof} It is clear that
\begin{align*}
\EE[\vp(X_1,\cdots, X_n,X_{n+1})]=&\sup_{P\in\cp}E_P[\vp(X_1,\cdots,X_{n+1})]\\
=&\sup_{P\in\cp}E_P[E_P[\vp(x_1,\cdots,x_n,X_{n+1})|\mathcal{F}_n]|_{(x_1,\cdots,x_n)=(X_1,\cdots,X_n)}]\\
\leq&\sup_{P\in\cp}E_P[\EE[\vp(x_1,\cdots,x_n,X_{n+1})]_{(x_1,\cdots,x_n)=(X_1,\cdots,X_n)}]\\
=&\EE[\EE[\vp(x_1,\cdots,x_n,X_{n+1})]|_{(x_1,\cdots,x_n)=(X_1,\cdots,X_n)}].
\end{align*}
\end{proof}

Proposition \ref{prop5} inspires us to enlarge $\cp$ to $\bar{\cp}$ such that $\{X_n\}_{n=1}^\infty$ is independent under $\bar{\cp}$.

\begin{proposition}
\label{prop4}
Let $\{X_n\}_{n=1}^\infty$ be a pseudo-independent sequence under $\EE$. We consider $\bar{\cp}$ defined by
$$\bar{\cp}=\{P\in\mathcal{M}:\forall \vp\in C_{b,Lip}(\mathbb{R}), E_P[\vp(X_{n})|\mathcal{F}_{n-1}]\leq\EE[\vp(X_{n})], \forall n\in\mathbb{N}\}$$

then we have $\bar{\cp}\supset\cp$ and $\{X_n\}_{n=1}^\infty$ is independent under $\EEP$ with $\EEP[\vp(X_n)]=\EE[\vp(X_n)], \forall n\in\mathbb{N}, \forall \vp\in C_{b,Lip}(\mathbb{R})$.

Moreover, $\forall n\in\mathbb{N}$,
$$\EEP[\vp(X_1,\cdots,X_{n+1})]=\EE[\EE[\vp(x_1,\cdots,x_n,X_{n+1})]|_{(x_1,\cdots,x_n)=(X_1,\cdots,X_n)}].$$
\end{proposition}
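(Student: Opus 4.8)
The plan is to establish the three assertions in turn, in increasing order of difficulty.

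\emph{The inclusion and the one-dimensional identity.} These are the routine parts. If $P\in\cp$, then the definition of pseudo-independence, read with one-variable test functions, is exactly the inequality $E_P[\vp(X_n)\mid\mathcal{F}_{n-1}]\le\EE[\vp(X_n)]$ that defines membership in $\bar{\cp}$; hence $\cp\subset\bar{\cp}$ and consequently $\EEP\ge\EE$ on every random variable, which already gives $\EEP[\vp(X_n)]\ge\EE[\vp(X_n)]$. For the reverse inequality I would use the tower property: for any $P\in\bar{\cp}$ the bound $E_P[\vp(X_n)\mid\mathcal{F}_{n-1}]\le\EE[\vp(X_n)]$ has a constant right-hand side, so $E_P[\vp(X_n)]=E_P\big[E_P[\vp(X_n)\mid\mathcal{F}_{n-1}]\big]\le\EE[\vp(X_n)]$, and taking the supremum over $P\in\bar{\cp}$ yields $\EEP[\vp(X_n)]\le\EE[\vp(X_n)]$. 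This proves $\EEP[\vp(X_n)]=\EE[\vp(X_n)]$ for all $\vp\in C_{b,Lip}(\br)$ and all $n$.

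\emph{Independence: the upper bound.} To verify independence under $\EEP$ I must show, for each $\vp\in C_{b,Lip}(\br^{n+1})$, that $\EEP[\vp(X_1,\ldots,X_{n+1})]=\EEP[g(X_1,\ldots,X_n)]$, where $g(x_1,\ldots,x_n):=\EEP[\vp(x_1,\ldots,x_n,X_{n+1})]$. By the one-dimensional identity applied to each frozen slice, $g$ equals $\EE[\vp(x_1,\ldots,x_n,X_{n+1})]$ — precisely the inner expectation in the displayed formula — and $g$ is bounded and Lipschitz in $(x_1,\ldots,x_n)$. For the inequality ``$\le$'' I would condition at level $n$: for $P\in\bar{\cp}$ the substitution (freezing) property of conditional expectation gives $E_P[\vp(X_1,\ldots,X_{n+1})\mid\mathcal{F}_n]=E_P[\vp(x_1,\ldots,x_n,X_{n+1})\mid\mathcal{F}_n]\big|_{(x_1,\ldots,x_n)=(X_1,\ldots,X_n)}$, and the defining inequality of $\bar{\cp}$ applied slicewise bounds the right-hand side by $g(X_1,\ldots,X_n)$; taking $E_P$ and then the supremum over $\bar{\cp}$ gives $\EEP[\vp(X_1,\ldots,X_{n+1})]\le\EEP[g(X_1,\ldots,X_n)]$. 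This is the exact analogue, for $\bar{\cp}$, of Proposition \ref{prop5}.

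\emph{Independence: the lower bound, and the main obstacle.} The reverse inequality is the crux. Fixing $\ve>0$, I would choose $P\in\bar{\cp}$ nearly optimal for $g$, i.e.\ $E_P[g(X_1,\ldots,X_n)]\ge\EEP[g(X_1,\ldots,X_n)]-\ve$, and then enrich $P$ into a new $P'\in\bar{\cp}$ that preserves the joint law of $(X_1,\ldots,X_n)$ — legitimately, since $\mathcal{F}_n=\sigma(X_1,\ldots,X_n)$ this amounts to keeping $P'=P$ on $\mathcal{F}_n$ — while replacing the conditional law of $X_{n+1}$ given $\mathcal{F}_n$ by one whose $X_{n+1}$-marginal is, up to $\ve$, optimal in $g(X_1,\ldots,X_n)=\sup_{Q\in\cp}E_Q[\vp(x,X_{n+1})]\big|_{x=(X_1,\ldots,X_n)}$. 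Concretely this is a pasting (concatenation at $\mathcal{F}_n$) of $P$ with a measurably selected family $\{Q^{\omega}\}\subset\cp$ depending only on $(X_1,\ldots,X_n)(\omega)$: the constraints up to level $n$ are inherited from $P|_{\mathcal{F}_n}$, while the new $(n+1)$-st conditional law is a $\cp$-marginal and hence automatically satisfies its $\bar{\cp}$-constraint, so that $P'\in\bar{\cp}$ and $E_{P'}[\vp(X_1,\ldots,X_{n+1})]\ge E_P[g(X_1,\ldots,X_n)]-\ve$. Letting $\ve\to0$ then gives $\EEP[\vp(X_1,\ldots,X_{n+1})]\ge\EEP[g(X_1,\ldots,X_n)]$, and together with the upper bound this establishes independence; the displayed ``Moreover'' identity is then just this independence relation, its inner expectation written, via the one-dimensional identity, as $\EE[\vp(x_1,\ldots,x_n,X_{n+1})]$. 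I expect the \emph{hard part} to be precisely this construction on the abstract Polish space $\Omega$: producing the requisite conditional kernels through regular conditional probabilities, carrying out the measurable selection of the near-optimal $Q^{\omega}\in\cp$, and checking that pasting preserves membership in $\bar{\cp}$ at every level.
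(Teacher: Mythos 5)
Your proposal follows essentially the same route as the paper: the inclusion $\cp\subset\bar{\cp}$ and the identity $\EEP[\vp(X_n)]=\EE[\vp(X_n)]$ via the tower property, the upper bound as the $\bar{\cp}$-analogue of Proposition \ref{prop5}, and the lower bound by exhibiting, for each $\vp$ and $\ve$, a near-optimal $P^*\in\bar{\cp}$ obtained by pasting an outer near-optimizer with slicewise near-optimal conditional laws of $X_{n+1}$. The only difference is one of emphasis: the paper simply asserts the existence of such a $P^*$ satisfying its conditions (i) and (ii), whereas you correctly flag the concatenation and measurable selection of the kernels $Q^{\omega}\in\cp$ as the substantive step behind that assertion.
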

\begin{proof}
For simplicity, we only consider the case of $n=2$. We consider
$$\bar{\cp}:=\{P:\forall \vp\in C_{b,Lip}(\mathbb{R}), E_P[\vp(X_1)]\leq\EE[\vp(X_1)] \ \text{and}\ E_P[\vp(X_{2})|X_1]\leq\EE[\vp(X_{2})]\}.$$
In this case, for each fixed $\vp\in C_{b,Lip}(\mathbb{R}^2)$ and $\ve>0$, there exits $P^*\in\bar{\cp}$ (depending on $\vp$ and $\ve$) such that

(i) $E_{P^*}[\psi(X_1)]\geq\EE[\psi(X_1)]-\frac{\ve}{2}$, where $\psi(x)=\EE[\vp(x,X_2)]$.

(ii) $E_{P^*}[\vp(x,X_2)|X_1]\geq\EE[\vp(x,X_2)]-\frac{\ve}{2}$, for all $x$ belongs to the range of $X_1$.

Then we have
\begin{align*}
\EEP[\vp(X_1,X_2)]\geq& E_{P^*}[\vp(X_1,X_2)]\\
=&E_{P^{*}}[E_{P^{*}}[\vp(x,X_2)|X_1]_{x=X_1}]\\
\geq&E_{P^{*}}[\EE[\vp(x,X_2)]|_{x=X_1}]-\frac{\ve}{2}\\
\geq&\EE[\EE[\vp(x,X_2)]|_{x=X_1}]-\ve\\
=&\EEP[\EEP[\vp(x,X_2)]|_{x=X_1}]-\ve.
\end{align*}
Combining with Proposition \ref{prop5} and noticing that $\ve$ can be arbitrarily small, we see that $X_2$ is independent of $X_1$ under $\bar{\cp}$.
\end{proof}

We conclude this section by an example.

\begin{example}\label{ex}
We consider a probability set $\cp=\{P_1, P_2\}$ and two binary random variables $X$ and $Y$ satisfying
\begin{equation*}
\begin{aligned}
&P_1(X=0, Y=0)=\frac{1}{16},\ P_1(X=1, Y=1)=\frac{9}{16}\\
&P_1(X=1,Y=0)=P_1(X=0, Y=1)=\frac{3}{16}\\
&P_2(X=0, Y=0)=P_2(X=1, Y=1)=P_2(X=0, Y=1)=P_2(X=1,Y=0)=\frac{1}{4}
\end{aligned}
\end{equation*}
We can see that $X$ and $Y$ are independent of each other under  $P_1$ and $P_2$ respectively, it is clear that $Y$ is pseudo-independent of $X$ under $\cp$. Let $\varphi^*$ be a bonded and Lipschitz function with
\begin{equation}\label{vp}
\varphi^*(0,0)=\varphi^*(1,1)=1, \quad \varphi^*(1,0)=\varphi^*(0,1)=0.
\end{equation}
By the simple calculation, we have
\begin{equation*}
\EE\lt[\varphi^*(X,Y)\rt]=\frac{5}{8}<\frac{11}{16}=\EE\lt[\EE\lt[\varphi^*(x,Y)\rt]|_{x=X}\rt],
\end{equation*}
which indicates that $Y$ is not independent of $X$.

Now we enlarge $\cp$ to $\bar{\cp}$ by
$$\bar{\cp}:=\{P: \forall \vp\in C_{b,Lip}(\mathbb{R}), E_P[\vp(X)]\leq\EE[\vp(X)]\ \text{and}\ E_P[\vp(Y)|X]\leq\EE[\vp(Y)]\}.$$
In particular, for $\vp^*$ defined in (\ref{vp}), we can choose $P^*$ by
\begin{align*}
P^*(X=1,Y=1)=\frac{9}{16}\ \ \ &P^*(X=1,Y=0)=\frac{3}{16}\\
P^*(X=0,Y=1)=\frac{1}{8}\ \ \ &P^*(X=0,Y=0)=\frac{1}{8}
\end{align*}
It is easily to verify that $P^*\notin\cp$ but $P^*\in\bar{\cp}$, Furthermore, we have
\begin{align*}
\EEP[\EEP\lt[\varphi^*(x,Y)]|_{x=X}\rt]&=E_{P^*}\lt[E_{P^*}\lt[\varphi^*(x,Y)\rt]|_{x=X}\rt]\\
&=\frac{11}{16}=E_{P^*}[\vp^*(X,Y)]=\EEP[\vp^*(X,Y)].
\end{align*}
\end{example}

\section{Law of large numbers for pseudo-independent random variables}

We firstly consider the law of large numbers for pseudo-independent sequence.

Let $\{X_{n}\}_{n=1}^\infty$ be a sequence of random variables on the sublinear expectation space $(\Omega, \mathcal{F}, \cp),$ and denote $S_{n}=X_{1}+\cdots+X_{n}$, $S_{0}=0$,  $\F_{n}=\sigma(X_{1},X_{2},...,X_{n})$ for $n \geq 1$ and $\F_{0}=\{\emptyset,\Omega\}$. In this section, we require the sequence $\{X_{n}\}_{n=1}^\infty$ consistent with the following hypothesis:
\begin{itemize}
\item[\textit{(H1)}] \textit{There exist a random variable $X$ with $\lim_{n\to\infty} nV(|X|\geq n)=0$ and a constant $K$ such that $V\left(|X_{n}| \geq x\right) \leq K V\lt(|X| \geq x\rt)$ for each $x \geq 0$ and $n \geq 1 .$}
\end{itemize}

We have the following law of large numbers for pseudo-independent sequence.

\begin{theorem}\label{lln}
Let $\{X_i\}_{i=1}^\infty$ be a pseudo-independent sequence under $\cp$ satisfying (H1), we define
$$\ou:=\limsup_{n\rightarrow\infty}\frac{\sum_{i=1}^n\EE[X_i\mathbf{I}_{\{|X_i|< n\}}]}{n} \ \ \ \lu:=\limsup_{n\rightarrow\infty}\frac{\sum_{i=1}^n-\EE[-X_i\mathbf{I}_{\{|X_i|< n\}}]}{n}$$
then we have $\forall \vp\in C_{b,Lip}(\mathbb{R})$,
\begin{equation}\label{eq10}
\min_{\lu\leq\mu\leq\ou}\vp(\mu)\leq\liminf_{n\rightarrow\infty}\EE[\vp(\frac{S_n}{n})]\leq\limsup_{n\rightarrow\infty}\EE[\vp(\frac{S_n}{n})]\leq\max_{\lu\leq\mu\leq\ou}\vp(\mu).
\end{equation}
\end{theorem}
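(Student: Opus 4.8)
The plan is to derive (\ref{eq10}) from a weak law of large numbers of concentration type, namely
$$\lim_{n\to\infty} V\!\left(\frac{S_n}{n}\notin [\lu-\ve,\ou+\ve]\right)=0\qquad\text{for every }\ve>0,$$
and then to pass from this concentration to the bounds on $\EE[\vp(S_n/n)]$ by a sandwich argument. Writing $I_\ve=[\lu-\ve,\ou+\ve]$ and using the pointwise inequalities
$$\min_{I_\ve}\vp-2\|\vp\|_\infty\mathbf{I}_{\{S_n/n\notin I_\ve\}}\le \vp\!\left(\tfrac{S_n}{n}\right)\le \max_{I_\ve}\vp+2\|\vp\|_\infty\mathbf{I}_{\{S_n/n\notin I_\ve\}},$$
monotonicity and sub-additivity together with $\EE[\mathbf{I}_A]=V(A)$ give $\EE[\vp(S_n/n)]\le \max_{I_\ve}\vp+2\|\vp\|_\infty V(S_n/n\notin I_\ve)$. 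For the lower bound I use $\EE[\vp(S_n/n)]=\sup_{P\in\cp}E_P[\vp(S_n/n)]\ge \sup_{P\in\cp}\big(\min_{I_\ve}\vp-2\|\vp\|_\infty P(S_n/n\notin I_\ve)\big)=\min_{I_\ve}\vp-2\|\vp\|_\infty v(S_n/n\notin I_\ve)$, where the conjugate lower probability $v$ appears because the supremum over $P$ makes the error term as small as possible. Since $v\le V\to0$, letting first $n\to\infty$, then $\ve\to0$, and invoking continuity of $\vp$ yields (\ref{eq10}).

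To prove the concentration I first truncate: set $\bar X_i=X_i\mathbf{I}_{\{|X_i|<n\}}$ and $\bar S_n=\sum_{i=1}^n\bar X_i$. By sub-additivity of $V$ and (H1),
$$V(S_n\neq\bar S_n)\le\sum_{i=1}^n V(|X_i|\ge n)\le nK\,V(|X|\ge n)=K\big(nV(|X|\ge n)\big)\to0,$$
so it suffices to establish the concentration for $\bar S_n/n$.

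The heart of the argument is a martingale estimate carried out under each fixed $P\in\cp$, and this is exactly where pseudo-independence enters. For fixed $P$, decompose $\bar S_n=M_n+A_n$ with $A_n=\sum_{i=1}^n E_P[\bar X_i\mid\F_{i-1}]$ and $M_n=\sum_{i=1}^n\big(\bar X_i-E_P[\bar X_i\mid\F_{i-1}]\big)$, so that $M_n$ is a $P$-martingale with orthogonal increments. Approximating the bounded but discontinuous map $x\mapsto x\mathbf{I}_{\{|x|<n\}}$ by functions in $C_{b,Lip}(\R)$ and using pseudo-independence, the compensator obeys the deterministic two-sided bound $-\EE[-\bar X_i]\le E_P[\bar X_i\mid\F_{i-1}]\le\EE[\bar X_i]$; hence $A_n/n$ lies in the non-random interval whose endpoints are precisely the averages defining $\lu$ and $\ou$. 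Consequently, for all large $n$ one has $A_n\le\sum_{i=1}^n\EE[\bar X_i]\le n(\ou+\ve/2)$, so $\{\bar S_n/n>\ou+\ve\}\subseteq\{M_n>n\ve/2\}$ under every $P$, with the analogous inclusion for the lower deviation using the lower compensator bound. By orthogonality of the increments and $E_P[\bar X_i^2]\le\EE[\bar X_i^2]$, Chebyshev's inequality gives $P(\bar S_n/n>\ou+\ve)\le P(M_n>n\ve/2)\le \tfrac{4}{n^2\ve^2}\sum_{i=1}^n\EE[\bar X_i^2]$, a bound uniform in $P$, and taking the supremum over $\cp$ yields $V(\bar S_n/n>\ou+\ve)\le \tfrac{4}{n^2\ve^2}\sum_{i=1}^n\EE[\bar X_i^2]$.

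It then remains to show $\tfrac1{n^2}\sum_{i=1}^n\EE[\bar X_i^2]\to0$. The layer-cake formula together with (H1) gives $\EE[\bar X_i^2]\le 2K\int_0^n xV(|X|\ge x)\,dx$, and since $xV(|X|\ge x)\to0$ its Cesàro average tends to $0$, whence $\tfrac1{n^2}\sum_{i=1}^n\EE[\bar X_i^2]\le \tfrac{2K}{n}\int_0^n xV(|X|\ge x)\,dx\to0$. This closes the concentration estimate and hence the theorem. I expect the main obstacle to be that a direct second-moment computation under $\EE$ has cross terms of indefinite sign, because the upper-centered variables $\bar X_i-\EE[\bar X_i]$ are not symmetric; the device that removes this difficulty is to work under each fixed $P$, where the genuine martingale structure makes the increments orthogonal while the conditional-mean bound supplied by pseudo-independence controls the compensator $A_n$. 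A secondary technical point is the Lipschitz approximation needed to legitimately apply pseudo-independence to the discontinuous truncation $x\mathbf{I}_{\{|x|<n\}}$; note that only pseudo-independence, not the independence under $\bar\cp$, is used, which is what makes this a purely probabilistic proof.
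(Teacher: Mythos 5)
Your proof is correct and follows essentially the same route as the paper: truncate, use pseudo-independence to trap the conditional means $E_P[\bar X_i\mid\mathcal{F}_{i-1}]$ in $[-\EE[-\bar X_i],\EE[\bar X_i]]$, apply Chebyshev to the resulting $P$-martingale uniformly in $P$, and pass to $\EE[\vp(S_n/n)]$ by the sandwich argument. The only difference is cosmetic: you prove the second-moment estimate $\frac{1}{n^2}\sum_{i=1}^n\EE[X_i^2\mathbf{I}_{\{|X_i|<n\}}]\to 0$ directly via the layer-cake formula, whereas the paper quotes it as Lemma 3.1 of Guo and Li.
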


To complete the proof, we need some lemmas. The first lemma is Lemma 3.1 in Guo and Li \cite{GL}.
\begin{lemma} \label{l2} Let $\{X_i\}_{i=1}^\infty$ satisfy (H1), then
\begin{equation}\label{eq2}
\lim_{n\rightarrow\infty}\frac{1}{n^2}\sum_{i=1}^n\be[|X_i|^2\mathbf{I}_{\{|X_i|\leq n\}}]=0.
\end{equation}
\end{lemma}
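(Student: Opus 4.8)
The plan is to establish a bound on the truncated second moment $\be[|X_i|^2\mathbf{I}_{\{|X_i|\le n\}}]$ that is \emph{uniform in $i$} and depends only on the tail of the dominating variable $X$, and then to run a Cesàro-type averaging argument. The whole difficulty is bookkeeping: the limit on the left is an average, and the summands must all be controlled by the same integral.

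First I would rewrite the truncated second moment through the tail-integration (layer-cake) formula. Fix $i$ and $n$ and set $W=|X_i|^2\mathbf{I}_{\{|X_i|\le n\}}$, a nonnegative random variable bounded by $n^2$. For any single $P\in\cp$ we have $E_P[W]=\int_0^{n^2}P(W>t)\,dt$, and since $\{W>t\}=\{\sqrt{t}<|X_i|\le n\}\subset\{|X_i|>\sqrt{t}\}$ this gives $P(W>t)\le P(|X_i|>\sqrt t)\le V(|X_i|>\sqrt t)$. The crucial point is that this last bound no longer depends on $P$, so taking the supremum over $P\in\cp$ directly yields
$$\be[|X_i|^2\mathbf{I}_{\{|X_i|\le n\}}]\le\int_0^{n^2}V(|X_i|>\sqrt t)\,dt,$$
with no need to interchange a supremum and an integral.

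Next I would invoke (H1): since $V(|X_i|>\sqrt t)\le V(|X_i|\ge\sqrt t)\le K\,V(|X|\ge\sqrt t)$, the substitution $t=s^2$ produces the uniform estimate $\be[|X_i|^2\mathbf{I}_{\{|X_i|\le n\}}]\le 2K\int_0^n s\,V(|X|\ge s)\,ds$, valid for every $i\ge 1$ with a right-hand side independent of $i$. Averaging over $i=1,\dots,n$ and dividing by $n^2$ then collapses the sum and leaves
$$\frac{1}{n^2}\sum_{i=1}^n\be[|X_i|^2\mathbf{I}_{\{|X_i|\le n\}}]\le\frac{2K}{n}\int_0^n s\,V(|X|\ge s)\,ds.$$

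It therefore remains to show the right-hand side tends to $0$. Writing $h(s):=s\,V(|X|\ge s)$, I note that $s\mapsto V(|X|\ge s)$ is nonincreasing, so the hypothesis $\lim_{n\to\infty}nV(|X|\ge n)=0$ along the integers upgrades to $\lim_{s\to\infty}h(s)=0$ along the reals, since for $s\in[n,n+1]$ one has $h(s)\le(n+1)V(|X|\ge n)$. As $0\le h(s)\le s$, the function $h$ is integrable on every bounded interval, and $\frac1n\int_0^n h(s)\,ds\to 0$ follows from the standard fact that the running average of a function vanishing at infinity also vanishes: split $\int_0^n=\int_0^M+\int_M^n$, pick $M$ so that $h<\varepsilon$ on $(M,\infty)$, and let $n\to\infty$. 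I expect the only delicate step to be the layer-cake argument over $\cp$—making the tail bound $P$-free \emph{before} taking the supremum—while the passage from the integer decay in (H1) to the real-variable decay of $h$ and the final averaging are routine.
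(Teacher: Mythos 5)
Your proposal is correct, but there is nothing in the paper to compare it against: the paper does not prove this lemma at all, importing it verbatim as Lemma 3.1 of Guo and Li \cite{GL}. Judged on its own merits, your argument is sound, and you correctly isolate the two points where such a proof could go wrong. First, the layer-cake step: for the sublinear functional $\be$ the identity $E_P[W]=\int_0^{n^2}P(W>t)\,dt$ holds only $P$ by $P$, and you rightly replace the integrand by the $P$-free majorant $V(|X_i|>\sqrt{t})$ \emph{before} taking the supremum over $\cp$, so that $\be\bigl[|X_i|^2\mathbf{I}_{\{|X_i|\le n\}}\bigr]\le\int_0^{n^2}V(|X_i|>\sqrt{t})\,dt$ follows without any illegitimate interchange of supremum and integral (only this inequality, not equality, is needed). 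Second, hypothesis (H1) gives decay of $nV(|X|\ge n)$ along integers, and you correctly upgrade this to $sV(|X|\ge s)\to0$ along the reals via the monotonicity of $s\mapsto V(|X|\ge s)$ (which holds since each $P\in\cp$ is monotone and the supremum preserves this), before running the Ces\`aro split $\int_0^M+\int_M^n$ with $h(s)=sV(|X|\ge s)\le s$. The substitution $t=s^2$, the uniformity in $i$ of the bound $2K\int_0^n sV(|X|\ge s)\,ds$ coming from the domination constant $K$ in (H1), and the final averaging are all correctly executed. The standard alternative route here is a discrete version of the same idea, slicing $\{|X_i|\le n\}$ into shells $\{j-1<|X_i|\le j\}$ and using summation by parts; your continuous tail-integration form is the cleaner real-variable analogue and gives a fully self-contained proof where the paper offers only a citation.
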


The next lemma generalizes the corresponding result for i.i.d. sequence in Chen et al. \cite{chen} to the pseudo-independent sequence. Here we give a very simple proof.
\begin{lemma}\label{th1}
Under the same condition of Theorem \ref{lln}, we have, for any $\ve>0$,
$$\lim_{n\to\infty}v(\lu-\ve \leq \frac{S_n}{n} \leq \ou+\ve)=1.$$
\end{lemma}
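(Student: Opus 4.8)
The plan is to prove this weak law of large numbers by the classical truncation-and-centering method, adapted to the sublinear framework via the probabilistic reformulation supplied by pseudo-independence. First I would fix $\ve>0$ and introduce the truncated variables $Y_i:=X_i\mathbf{I}_{\{|X_i|<n\}}$ (note the truncation level is the running index $n$, so strictly these form a triangular array). Writing $T_n:=Y_1+\cdots+Y_n$, the argument splits into three standard pieces: showing that $S_n$ and $T_n$ coincide with probability tending to $1$ under every $P\in\cp$; controlling the spread of $T_n/n$ around its mean via a second-moment estimate; and identifying the relevant mean with the interval $[\lu,\ou]$.

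For the truncation step, I would estimate $\sup_{P\in\cp}P(S_n\neq T_n)\leq\sum_{i=1}^n V(|X_i|\geq n)\leq K\,n\,V(|X|\geq n)$ using subadditivity of the upper probability together with hypothesis (H1), and this bound vanishes by the assumption $\lim_{n}nV(|X|\geq n)=0$. Hence it suffices to prove the claim with $T_n$ in place of $S_n$. For the concentration step, I want a Chebyshev-type inequality under each $P\in\cp$ simultaneously. Here pseudo-independence is the key tool: under any $P\in\cp$, the conditional expectation $E_P[\vp(X_i)\mid\cf_{i-1}]$ lies in the non-random interval $[-\EE[-\vp(X_i)],\EE[\vp(X_i)]]$ for every test function, which is exactly the martingale-difference structure needed to make the cross terms manageable. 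Concretely, I would center the $Y_i$ by quantities $a_i$ chosen so that $Y_i-a_i$ has conditionally controlled first moment under every $P$, so that $\mathrm{Var}_P(T_n)\leq\sum_{i=1}^nE_P[Y_i^2]\leq\sum_{i=1}^n\EE[|X_i|^2\mathbf{I}_{\{|X_i|\leq n\}}]$, and then invoke Lemma \ref{l2} to conclude that $n^{-2}\sum_{i=1}^n\EE[|X_i|^2\mathbf{I}_{\{|X_i|\leq n\}}]\to0$. A uniform (over $P\in\cp$) Chebyshev bound then gives $\sup_{P\in\cp}P\bigl(|T_n-E_P[T_n]|\geq \tfrac{\ve}{2}n\bigr)\to0$.

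The final piece is to reconcile the $P$-dependent centering $E_P[T_n]/n$ with the deterministic interval $[\lu,\ou]$. By pseudo-independence each conditional mean $E_P[Y_i\mid\cf_{i-1}]$ lies between $-\EE[-X_i\mathbf{I}_{\{|X_i|<n\}}]$ and $\EE[X_i\mathbf{I}_{\{|X_i|<n\}}]$, so that $E_P[T_n]=\sum_{i=1}^nE_P\bigl[E_P[Y_i\mid\cf_{i-1}]\bigr]$ is squeezed between $\sum_i-\EE[-X_i\mathbf{I}_{\{|X_i|<n\}}]$ and $\sum_i\EE[X_i\mathbf{I}_{\{|X_i|<n\}}]$, uniformly in $P$. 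Passing to the limsup and recalling the definitions of $\lu$ and $\ou$, the centering eventually falls inside $[\lu-\tfrac{\ve}{2},\ou+\tfrac{\ve}{2}]$ for all large $n$, uniformly in $P$. Combining the three pieces yields $\sup_{P\in\cp}P\bigl(\tfrac{S_n}{n}\notin[\lu-\ve,\ou+\ve]\bigr)\to0$, which is precisely $v(\lu-\ve\leq \tfrac{S_n}{n}\leq\ou+\ve)\to1$.

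I expect the main obstacle to be the concentration estimate, specifically making the second-moment/variance bound genuinely uniform over all $P\in\cp$ rather than pointwise. The subtlety is that pseudo-independence controls only conditional first moments, not conditional independence of the increments, so the usual orthogonality of martingale differences must be used carefully: I would need to verify that the telescoped centering $Y_i-E_P[Y_i\mid\cf_{i-1}]$ forms a $P$-martingale difference sequence so that its variance is additive under each $P$, and then dominate each term's second moment by the $P$-free quantity $\EE[|X_i|^2\mathbf{I}_{\{|X_i|\leq n\}}]$ to apply Lemma \ref{l2}. Once this uniform variance bound is in hand the rest is routine.
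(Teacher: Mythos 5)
Your proposal is correct and follows essentially the same route as the paper's proof: truncation at level $n$, the bound $P(S_n\neq T_n)\le KnV(|X|\geq n)\to0$ from (H1), a Chebyshev estimate for the conditionally centered sums controlled by Lemma \ref{l2}, and the pointwise squeeze of the conditional means $E_P[Y_{n,i}\mid\cf_{i-1}]$ into the deterministic interval defining $\lu$ and $\ou$. The only point to tighten is the one you already flag yourself: the Chebyshev step must be applied to the martingale-difference sum $\sum_i(Y_{n,i}-E_P[Y_{n,i}\mid\cf_{i-1}])$ rather than to $T_n-E_P[T_n]$, since $\mathrm{Var}_P(T_n)$ need not be additive under pseudo-independence; this is exactly how the paper proceeds.
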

\begin{proof}
Let $Y_{n,i}=X_i\mathbf{I}_{\{|X_i|< n\}}$ and $T_n=\sum_{i=1}^n Y_{n,i}$.  For each $P\in\cp$, obviously, $E_P[Y_{n,i}^2]\leq\be[X_i^2]$, and $-\EE[-Y_{n,i}]\leq E_P[Y_{n,i}|\cf_{i-1}]\leq\EE[Y_{n,i}]$.

Firstly, by Chebyshev's inequality and Jensen's inequality, we have
\begin{align*}P(\frac{\sum_{i=1}^n(Y_{n,i}-E_P[Y_{n,i}|\cf_{i-1}])}{n}>\ve)&\leq\frac{\sum_{i=1}^n(E_P[Y_{n,i}^2]+E_P[(E_P[Y_{n,i}|\cf_{i-1}])^2])}{n^2\ve^2}\\
&\leq\frac{2\sum_{i=1}^nE_P[Y_{n,i}^2]}{n^2\ve^2}\leq \frac{2\sum_{i=1}^n\be[Y_{n,i}^2]}{n^2\ve^2}.
\end{align*}
Noting that,
$$P(S_n\neq T_n)\leq\sum_{i=1}^nP(|X_i|\geq n)\leq \sum_{i=1}^nV(|X_i|\geq n)\leq nKV(|X|\geq n),$$
thus
\begin{align*}
P(\frac{S_n}{n}>\ou+\ve)&\leq P(S_n\neq T_n)+P(\frac{\sum_{i=1}^n(Y_{n,i}-E[Y_{n,i}|\cf_{i-1}])}{n}>\frac{\ve}{2})\\
&\ \ \ \ +P(\frac{\sum_{i=1}^nE[Y_{n,i}|\cf_{i-1}]}{n}>\ou+\frac{\ve}{2})\\
&\leq nKV(|X|\geq n)+\frac{8\sum_{i=1}^n\be[Y_{n,i}^2]}{n^2\ve^2}+P(\frac{\sum_{i=1}^n\be[Y_{n,i}]}{n}>\ou+\frac{\ve}{2}).
\end{align*}
Combining with Lemma \ref{l2}, we can imply that
$$V(\frac{S_n}{n}>\ou+\ve)\rightarrow 0,$$
Similarly, we can prove that $V(\frac{S_n}{n}<\lu-\ve)\rightarrow 0$, the proof is completed.
\end{proof}

Now we give the proof of Theorem \ref{lln}.

\begin{proof}[Proof of Theorem \ref{lln}]
If we take $\vp'=-\vp$ in (\ref{eq10}), we only need to prove that
$$\limsup_{n\rightarrow\infty}\EE[\vp(\frac{S_n}{n})]\leq\max_{\lu\leq\mu\leq\ou}\vp(\mu).$$
For each $\ve>0$, we have
$$\be[\vp(\frac{S_n}{n})]\leq\max_{\lu-\ve\leq\mu\leq\ou+\ve}\vp(\mu)+\max_{\mu\in\br}\vp(\mu)(V(\frac{S_n}{n}>\ou+\ve)+V(\frac{S_n}{n}<\lu-\ve)).$$
Since $\vp$ is bounded, letting $n\rightarrow\infty$, we can complete the proof.
\end{proof}

We reformulate Peng's LLN (see Theorem 2.4.1 in Peng \cite{pengbook}) for the probability set $\cp$ and give a purely probabilistic proof. Such probabilistic proof was firstly considered in Li \cite{li2013} (see Theorem 2.2) under the condition of $\EE[|X_1|^{1+\delta}]<\infty$ for some $\delta>0$. The current proof is totally different from the original one in Peng \cite{pengbook} based on the PDE estimation.

\begin{theorem}\label{PengLLN}
Let $\{X_i\}_{i=1}^\infty$ be an i.i.d. sequence under $\EE$ with $\lim_{\lambda\rightarrow+\infty}\EE[(|X_1|-\lambda)^+]=0$, then
\begin{equation}\label{eq20}
\lim_{n\rightarrow\infty}\mathbb{E}[\vp(\frac{X_1+\cdots+X_n}{n})]=\max_{-\mathbb{E}[-X_1]\leq\mu\leq\mathbb{E}[X_1]}\vp(\mu),
\end{equation}
where $\vp$ is any continuous function satisfying linear growth condition.
\end{theorem}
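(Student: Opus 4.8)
The plan is to derive the upper bound from the law of large numbers for pseudo-independent sequences (Theorem \ref{lln}), and to obtain the matching lower bound by passing to the enlarged family $\bar{\cp}$ and invoking the classical weak law of large numbers under a suitably chosen product measure.

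First I would note that an i.i.d. sequence is in particular independent, hence pseudo-independent by Proposition \ref{conditional expt ineq}. The hypothesis $\lim_{\lambda\to+\infty}\EE[(|X_1|-\lambda)^+]=0$ yields (H1) with $X=X_1$ and $K=1$: for $Y\ge0$ one has $(Y-\lambda)^+\ge\lambda\mathbf{I}_{\{Y\ge2\lambda\}}$, so taking $\sup_{P\in\cp}$ gives $2\lambda V(|X_1|\ge2\lambda)\le2\EE[(|X_1|-\lambda)^+]\to0$, i.e. $nV(|X_1|\ge n)\to0$, while identical distribution gives $V(|X_n|\ge x)=V(|X_1|\ge x)$. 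The same condition gives $\EE[|X_1|\mathbf{I}_{\{|X_1|\ge n\}}]\le2\EE[(|X_1|-n/2)^+]\to0$, whence $\EE[X_1\mathbf{I}_{\{|X_1|<n\}}]\to\EE[X_1]$ and $\EE[-X_1\mathbf{I}_{\{|X_1|<n\}}]\to\EE[-X_1]$. Since the summands do not depend on $i$ by identical distribution, the quantities in Theorem \ref{lln} are $\ou=\EE[X_1]$ and $\lu=-\EE[-X_1]$, so for $\vp\in\cbl$ Theorem \ref{lln} gives $\limsup_{n\to\infty}\EE[\vp(S_n/n)]\le\max_{-\EE[-X_1]\le\mu\le\EE[X_1]}\vp(\mu)$. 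To cover a general continuous $\vp$ of linear growth I would truncate $\vp$ by a bounded Lipschitz function and control the remainder by the uniform integrability of $\{S_n/n\}$ under $\cp$, which again follows from (H1).

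For the lower bound, which is the crux, I would exploit that for an independent sequence $\EE$ and $\EEP$ agree on bounded Lipschitz functions of $(X_1,\dots,X_n)$: by Proposition \ref{prop4} the nested expectation equals $\EEP[\vp(X_1,\dots,X_n)]$, and by the definition of independence it also equals $\EE[\vp(X_1,\dots,X_n)]$. Consequently $\EE[\vp(S_n/n)]=\sup_{P\in\bar{\cp}}E_P[\vp(S_n/n)]$, so it suffices to produce one favourable $P^*\in\bar{\cp}$. Let $\mu^*$ attain $\max_{\lu\le\mu\le\ou}\vp(\mu)$. The set $\mathcal{Q}$ of laws $Q$ on $\br$ with $E_Q[\psi]\le\EE[\psi(X_1)]$ for all $\psi\in\cbl$ is convex and weakly compact (tightness and the bound $\sup_{Q\in\mathcal{Q}}E_Q[(|\cdot|-\lambda)^+]\le\EE[(|X_1|-\lambda)^+]\to0$ coming from the moment condition), and its attainable means fill exactly $[-\EE[-X_1],\EE[X_1]]$; hence there is $Q\in\mathcal{Q}$ with $E_Q[\mathrm{id}]=\mu^*$ that is uniformly integrable. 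Taking $P^*$ under which $X_1,X_2,\dots$ are classically i.i.d.\ with law $Q$ (legitimately, after the standard reduction to the canonical sequence space, where such a product measure exists and where $\EE[\vp(S_n/n)]$ is unchanged), the domination $E_Q[\psi]\le\EE[\psi(X_1)]=\EE[\psi(X_n)]$ shows $P^*\in\bar{\cp}$. The classical weak law of large numbers then gives $S_n/n\to\mu^*$ in $P^*$-probability, and dominated convergence (linear growth together with the uniform integrability of $S_n/n$ under $P^*$) yields $E_{P^*}[\vp(S_n/n)]\to\vp(\mu^*)=\max_{\lu\le\mu\le\ou}\vp(\mu)$. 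Hence $\liminf_{n\to\infty}\EE[\vp(S_n/n)]\ge\max_{\lu\le\mu\le\ou}\vp(\mu)$, and combining with the upper bound completes the proof.

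I expect the main obstacle to be the lower bound construction: verifying that the means of $\cp$-dominated one-dimensional laws sweep out the entire interval $[-\EE[-X_1],\EE[X_1]]$, and producing an honest $P^*\in\bar{\cp}$ that realizes an i.i.d.$(Q)$ structure on the given space. Both points rest on the moment hypothesis $\lim_{\lambda\to+\infty}\EE[(|X_1|-\lambda)^+]=0$, which simultaneously identifies $\ou$ and $\lu$, supplies the uniform integrability needed to pass from $\cbl$ to linear-growth test functions and to exchange limit and expectation, and guarantees the weak compactness that makes $\mu^*$ an attainable mean.
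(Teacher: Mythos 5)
Your proposal is correct and follows essentially the same route as the paper's proof: the upper bound comes from Theorem \ref{lln} after verifying (H1) and identifying $\ou=\EE[X_1]$ and $\lu=-\EE[-X_1]$ from the moment condition, and the lower bound is obtained by selecting a classical i.i.d.\ product measure from $\bar{\cp}$ with a prescribed mean near the maximizer and invoking the classical weak law of large numbers. If anything, you supply more detail than the paper at the one delicate point, namely why $\liminf_n\EE[\vp(S_n/n)]$ can be bounded below via a $P^*\in\bar{\cp}\setminus\cp$, which rests on the identity $\EE=\EEP$ on cylinder functions of the independent sequence coming from Proposition \ref{prop4}.
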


\begin{proof}
We only need to prove that (\ref{eq20}) holds for $\vp\in C_{b,Lip}(\br)$. By the same argument in Peng \cite{pengbook}, it can be easily generalized to $\vp$ with linear growth condition.

It is clear that $nV(|X_1|\geq n)\rightarrow 0$ and $\EE[\pm X_1\mathbf{I}_{\{|X_1|<n\}}]\rightarrow\EE[\pm X_1]$ since $\lim_{\lambda\rightarrow+\infty}\EE[(|X_1|-\lambda)^+]=0$. Then by Theorem \ref{lln}, we have
$$\limsup_{n\rightarrow\infty}\EE[\vp(\frac{S_n}{n})]\leq\max_{-\EE[-X_1]\leq\mu\leq\EE[X_1]}\vp(\mu).$$
On the other hand, for fixed $\vp\in C_{b,Lip}(\br)$ with Lipschitz constant $L_\vp$ and bound $C_\vp$,  and for each ${\ve}>0$, there exists  $\mu\in(-\EE[-X_1],\EE[X_1])$ such that $\vp(\mu)\geq \max_{-\EE[-X_1]\leq\mu\leq\EE[X_1]}\vp(\mu)-{\ve}$, we can find $E_P$ such that $\{X_i\}$ is an i.i.d. sequence under $E_P$ with $E_P[X_1]=\mu$, where $P$ can be chosen from $\bar{\cp}$ defined in Proposition \ref{prop4}.  In this case, by the classical weak law of large numbers,
$$P(|\frac{S_n}{n}-\mu|>\ve)\rightarrow 0.$$
Then
$$|E_P[\vp(\frac{S_n}{n})]-\vp(\mu)|\leq L_\vp\ve+2C_\vp P(|\frac{S_n}{n}-\mu_n|>\ve),$$
We can obtain that
$$\lim_{n\rightarrow\infty}|E_P[\vp(\frac{S_n}{n})]-\vp(\mu)|=0,$$
which implies that, for each ${\ve}>0$,
\begin{equation}
\liminf_{n\rightarrow\infty}\be[\vp(\frac{S_n}{n})]\geq\max_{-\EE[-X_1]\leq\mu\leq\EE[X_1]}\vp(\mu)-\ve.
\end{equation}
Let $\ve\rightarrow 0$, the LLN holds.
\end{proof}

\section{Central limit theorem for pseudo-independent random variables}

In this section, we consider the CLT for pseudo-independent sequence. We firstly reformulate the Peng's CLT in Peng \cite{pengbook} on sublinear expectation space $(\Omega,\mathcal{F},\cp)$. The further result, for example, the convergence rate of CLT can be found in Fang et al. \cite{fang} and Song \cite{song}.

\begin{theorem} \label{PengCLT}
Let $\{X_i\}_{i=1}^\infty$ be an i.i.d. sequence of random variables on $(\Omega,\mathcal{F},\cp)$, we assume further
$$\lim_{\lambda\rightarrow+\infty}\EE[(|X_1|^2-\lambda)^+]=0,$$
then
$$\lim_{n\rightarrow\infty}\EE[\vp(\frac{X_1+\cdots+X_n}{\sqrt{n}})]=\EE[\vp(\xi)],$$
for all continuous function $\vp$ with linear growth condition, where $\xi$ is a $G$-normally distributed random variable with $\xi\sim\mathcal{N}(0,[-\EE[-X_1^2],\EE[X_1^2]])$.
\end{theorem}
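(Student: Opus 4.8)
The most direct route is to recognize the statement as a genuine reformulation. The set $\cp\subset\cm$ induces, via $\EE[\cdot]=\sup_{P\in\cp}E_P[\cdot]$, a sublinear expectation on any linear space $\H$ of random variables that is stable under composition with $C_{b,Lip}$ functions and contains the constants; in particular one may take the space generated by $\{X_i\}$. The identical-distribution and independence notions of Definitions \ref{d1} and \ref{PI} are exactly Peng's, and the hypothesis $\lim_{\lambda\to+\infty}\EE[(|X_1|^2-\lambda)^+]=0$ is precisely the uniform-integrability condition on $X_1^2$ used in Peng \cite{pengbook}. Hence $\{X_i\}$ meets all the assumptions of Peng's CLT verbatim — including the standing mean-zero normalization $\EE[X_1]=-\EE[-X_1]=0$ under which $\frac{S_n}{\sqrt n}$ can converge — and the conclusion follows directly from the central limit theorem of Peng \cite{pengbook}. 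This is the proof I would record first, since nothing in the passage from $(\Omega,\H,\EE)$ to $(\Omega,\F,\cp)$ alters the axioms.

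In the probabilistic spirit of the proof of Theorem \ref{PengLLN}, however, I would also sketch a self-contained argument based on the enlargement of Proposition \ref{prop4}. For the upper bound, pass to $\bar\cp$, under which $\{X_i\}$ is independent and $\EEP[\vp(X_i)]=\EE[\vp(X_i)]$. For each $P\in\bar\cp$ the one-step conditional expectations obey $-\EE[-\vp(X_i)]\le E_P[\vp(X_i)\mid\cf_{i-1}]\le\EE[\vp(X_i)]$; choosing $\vp$ to approximate $x\mapsto x^2$ shows the conditional second moments lie in $[\ls^2,\os^2]$ with $\ls^2=-\EE[-X_1^2]$ and $\os^2=\EE[X_1^2]$, while the conditional means vanish in the mean-zero case. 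Thus, under each such $P$, $\frac{S_n}{\sqrt n}$ is a normalized martingale-difference array whose conditional variances are trapped in $[\ls^2,\os^2]$, and a Lindeberg-type martingale central limit theorem — its Lindeberg condition following from $\lim_{\lambda\to+\infty}\EE[(|X_1|^2-\lambda)^+]=0$ — compares $E_P[\vp(\frac{S_n}{\sqrt n})]$ with the $G$-normal expectation $\EE[\vp(\xi)]$. Taking the supremum over $P\in\bar\cp$ yields $\limsup_{n\to\infty}\EE[\vp(\frac{S_n}{\sqrt n})]\le\EE[\vp(\xi)]$.

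The lower bound is where the real work lies, and I expect it to be the main obstacle. Unlike the LLN, whose extremal means are attained by a single i.i.d. law, the value $\EE[\vp(\xi)]$ is the supremum over time-varying volatility profiles $\theta_s\in[\ls,\os]$ of $E[\vp(\int_0^1\theta_s\,dB_s)]$, so no single classical law of $\frac{S_n}{\sqrt n}$ can match it in general. The plan is to fix a near-optimal piecewise-constant profile, partition $\{1,\dots,n\}$ into blocks on which the target volatility is constant, and build $P\in\bar\cp$ under which the $X_i$ carry the prescribed conditional variance on each block; a triangular-array CLT with block-dependent variances then drives $E_P[\vp(\frac{S_n}{\sqrt n})]$ toward the chosen profile's value, and refining the partition while letting $\ve\to0$ recovers $\EE[\vp(\xi)]$. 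The delicate points are verifying that such a $P$ genuinely belongs to $\bar\cp$ — the pseudo-independence inequality must hold for \emph{every} $\vp\in C_{b,Lip}(\br)$, not merely for the quadratic — and controlling the discretization error of the volatility profile uniformly in $n$. These are exactly the difficulties that the viscosity-solution PDE method of Peng \cite{pengbook} sidesteps, which is why the direct reduction of the first paragraph is the cleaner argument to present.
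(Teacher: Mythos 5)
Your first paragraph is exactly the paper's own treatment: Theorem \ref{PengCLT} is presented there purely as a reformulation of Peng's CLT on $(\Omega,\mathcal{F},\cp)$, with no proof beyond the citation to Peng \cite{pengbook}, so the direct reduction --- together with your correct observation that the mean-zero normalization $\EE[X_1]=-\EE[-X_1]=0$ is an implicit standing assumption omitted from the statement --- is precisely the intended argument. Your supplementary probabilistic sketch goes beyond what the paper records and rightly flags the lower bound (matching the supremum over time-varying volatility profiles) as the real obstruction, but since you present the citation route as primary, the proposal agrees with the paper's approach.
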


Recently, Zhang \cite{zhang2021} has weaken the condition $\lim_{\lambda\rightarrow+\infty}\mathbb{E}[(|X_1|^2-\lambda)^+]=0$ to $nV(|X_1|^2\geq n)\rightarrow 0$, which inspires us to assume the sequence $\{X_{n}\}_{n=1}^\infty$ consistent with the following hypothesis:
\begin{itemize}
\item[\textit{(H2)}] \textit{There exist a random variable $X$ with $\lim_{n\to\infty} nV(X^2\geq n)=0$ and a constant $K$ such that $V\left(|X_{n}| \geq x\right) \leq K V\lt(|X| \geq x\rt)$ for each $x \geq 0$ and $n \geq 1 .$}
\end{itemize}

Let $S_n=X_1+\cdots+X_n$, we have the following central limit theorem for pseudo-independent sequence.

\begin{theorem}\label{clt}
Let $\{X_i\}_{i=1}^\infty$ be a pseudo-independent sequence under $\cp$ satisfying (H2) and $\EE[X_i]=\EE[-X_i]=0, \forall i\in\mathbb{N}$, we define
$$\os^2:=\limsup_{i\rightarrow\infty}\EE[X_i^2],\ \ \ \ls^2:=\liminf_{i\rightarrow\infty}-\EE[-X_i^2].$$
We further assume that $\os<+\infty$, then we have, $\forall \vp\in C_{b,Lip}(\br)$,
$$-\EE[-\vp(\xi)]\leq\liminf_{n\rightarrow\infty}\EE[\vp(\frac{S_n}{\sqrt{n}})]\leq\limsup_{n\rightarrow\infty}\EE[\vp(\frac{S_n}{\sqrt{n}})]\leq\EE[\vp(\xi)], $$
where $\xi$ is a $G$-normally distributed random variable with $\xi\sim\mathcal{N}(0,[\ls^2,\os^2])$.
\end{theorem}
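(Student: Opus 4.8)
The plan is to prove only the upper estimate
$$\limsup_{n\to\infty}\EE\Big[\vp\big(\tfrac{S_n}{\sqrt n}\big)\Big]\leq\EE[\vp(\xi)]\qquad\text{for every }\vp\in C_{b,Lip}(\br),$$
and then obtain the lower estimate for free by symmetry. Indeed, applying the upper estimate to $-\vp$ gives $\limsup_n\EE[-\vp(S_n/\sqrt n)]\leq\EE[-\vp(\xi)]$, while for every $n$ the trivial inequality $\EE[Y]\geq-\EE[-Y]$ (the upper expectation dominates the lower one) yields
$$\liminf_{n\to\infty}\EE\Big[\vp\big(\tfrac{S_n}{\sqrt n}\big)\Big]\geq-\limsup_{n\to\infty}\EE\Big[-\vp\big(\tfrac{S_n}{\sqrt n}\big)\Big]\geq-\EE[-\vp(\xi)],$$
which is exactly the required lower bound. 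So everything reduces to the one-sided statement, and it suffices to treat $\vp\in C_{b,Lip}(\br)$.

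To prove the upper bound I would run a Lindeberg-type interpolation against the $G$-heat equation. Let $u$ be the solution of $\partial_t u-G(\partial^2_{xx}u)=0$, $u|_{t=0}=\vp$, with $G(\alpha)=\frac12(\os^2\alpha^+-\ls^2\alpha^-)$, so that $u(1,0)=\EE[\vp(\xi)]$ by the definition of the $G$-normal law. Writing $S_0=0$ and telescoping along $j=0,1,\dots,n$,
$$\vp\big(\tfrac{S_n}{\sqrt n}\big)-\EE[\vp(\xi)]=\sum_{j=1}^{n}\Big[u\big(\tfrac{n-j}{n},\tfrac{S_j}{\sqrt n}\big)-u\big(\tfrac{n-j+1}{n},\tfrac{S_{j-1}}{\sqrt n}\big)\Big],$$
I would Taylor–expand each summand in time and space around $(\tfrac{n-j}{n},\tfrac{S_{j-1}}{\sqrt n})$; using the equation to replace the time derivative, the leading part of the $j$-th term is $\partial_x u\cdot\frac{X_j}{\sqrt n}+\frac12\partial^2_{xx}u\cdot\frac{X_j^2}{n}-G(\partial^2_{xx}u)\cdot\frac1n$, plus a time remainder and a spatial remainder.

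The decisive step is to take the classical conditional expectation $E_P[\,\cdot\mid\cf_{j-1}]$ for a fixed $P\in\cp$. The base point is $\cf_{j-1}$-measurable, so pseudo-independence governs the increments: since $\EE[X_j]=\EE[-X_j]=0$, the bound $-\EE[-X_j]\leq E_P[X_j\mid\cf_{j-1}]\leq\EE[X_j]$ forces $E_P[X_j\mid\cf_{j-1}]=0$ and kills the first-order term, and since $-\EE[-X_j^2]\leq E_P[X_j^2\mid\cf_{j-1}]\leq\EE[X_j^2]$, for all large $j$ the conditional second moment lies in $[\ls^2-\ve,\os^2+\ve]$. The elementary inequality $\tfrac12 c\,\alpha-G(\alpha)\leq0$, valid for every $\alpha\in\br$ and every $c\in[\ls^2,\os^2]$ (which is precisely the defining property of this $G$), then makes the second-order term nonpositive up to $O(\ve/n)$. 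Summing the $n$ contributions, taking $\sup_{P\in\cp}$ to recover $\EE$, and letting $n\to\infty$ with $\ve\to0$ gives the upper bound. The moment facts for $X_j$ and $X_j^2$, which are not bounded Lipschitz, are obtained by truncating and approximating in $C_{b,Lip}(\br)$, the errors being absorbed below.

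The main obstacle is the remainder control, which is where (H2) enters exactly as (H1) did in Lemma \ref{th1}. With only a second moment available I would avoid any third-moment bound and instead use a genuine Lindeberg truncation: split each increment at level $\eta\sqrt n$, bound the spatial Taylor remainder on $\{|X_j|\leq\eta\sqrt n\}$ by the modulus of continuity of $\partial^2_{xx}u$ times $X_j^2/n$, and control the part on $\{|X_j|>\eta\sqrt n\}$ through $\sum_{j=1}^n V(|X_j|\geq\eta\sqrt n)\leq nKV(|X|\geq\eta\sqrt n)\to0$, which is the content of (H2); the modulus term then disappears as $\eta\to0$. A secondary technical point is the regularity of $u$: when $\ls=0$ the operator $G$ is degenerate and $\partial^2_{xx}u$ may fail to be continuous. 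I would handle this in the usual way by adding an independent vanishing Gaussian perturbation, i.e. replacing $[\ls^2,\os^2]$ by $[\ls^2+\kappa^2,\os^2+\kappa^2]$, running the whole argument in the nondegenerate regime where $u$ is smooth, and finally letting $\kappa\to0$ using the Lipschitz continuity of $\vp$ together with $\EE[\vp(\xi_\kappa)]\to\EE[\vp(\xi)]$.
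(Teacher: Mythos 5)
Your route is genuinely different from the paper's. The paper first invokes Proposition \ref{prop4} to enlarge $\cp$ to $\bar{\cp}$, so that the pseudo-independent sequence becomes independent in Peng's sense with unchanged one-dimensional moments, and then applies a ready-made CLT for independent, non-identically distributed arrays (Lemma \ref{lm1}, proof omitted and referred to Li \cite{li}, combined with the truncation Lemma \ref{lm2} in the style of Zhang \cite{zhang2021}); finally it discards the first $N$ terms to force the variances into $[\ls^2-\ve,\os^2+\ve]$ and uses $\EE\leq\EEP$. You instead run the Lindeberg interpolation against the $G$-heat equation directly, extracting the conditional moment bounds from pseudo-independence under each fixed $P\in\cp$, which avoids $\bar{\cp}$ and the external lemmas entirely; your symmetry reduction of the lower estimate to the upper one is also cleaner than the paper, which only proves the upper bound explicitly. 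The technical points you defer (regularity of $u$ near $t=0$, approximating $x$, $x^2$ and their truncations by $C_{b,Lip}$ test functions before pseudo-independence applies, the Gaussian perturbation when $\ls=0$) are standard and handled as in Peng's original argument.

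There is, however, one step that does not close as written: the lower bound on the conditional second moment \emph{after} the Lindeberg truncation. The inequality $\tfrac12 c\alpha\leq G(\alpha)$ for $\alpha<0$ requires $c\geq\ls^2$, and once you split at level $\eta\sqrt n$ the second-order term that survives conditioning involves $E_P[X_j^2\mathbf{I}_{\{|X_j|\leq\eta\sqrt n\}}\mid\cf_{j-1}]$, for which pseudo-independence only yields the lower bound $-\EE[-X_j^2\mathbf{I}_{\{|X_j|\leq\eta\sqrt n\}}]$. Truncation can only decrease second moments, and (H2) does not force $-\EE[-X_j^2\mathbf{I}_{\{|X_j|\leq c\}}]$ to approach $-\EE[-X_j^2]$ as $c\to\infty$: it controls $nV(X^2\geq n)$, hence the probability of the bad event and the first- and third-moment corrections, but it gives no uniform integrability of $X_j^2$ over $\cp$ (a family $\{Q_c\}$ with tails $Q_c(X^2>s)=\tfrac{1}{s\log s}$ supported on $s\in[c,T_c]$ and normalized so that $E_{Q_c}[X^2]=1$ satisfies $sV(X^2>s)\to0$ while $\inf_c E_{Q_c}[X^2\wedge m]\to0$). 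When $\partial^2_{xx}u<0$ and the truncated conditional variance drops below $\ls^2$, the excess $\tfrac12(c'-\ls^2)\partial^2_{xx}u$ is positive and its sum over $j$ is not negligible, and controlling the bad set only through $\sum_j V(|X_j|\geq\eta\sqrt n)\to0$ does not recover the lost variance. To be fair, exactly the same point is buried in the paper's own treatment (in the omitted proof of Lemma \ref{lm1} and in identifying the truncated array's $\ls^*$ with $\inf_i(-\EE[-X_i^2])$ in Lemma \ref{lm2}), and it is vacuous when $\ls=0$; but for $\ls>0$ you would need either a genuine Lindeberg condition $\tfrac1n\sum_{j\leq n}\EE[X_j^2\mathbf{I}_{\{|X_j|>\eta\sqrt n\}}]\to0$ or a definition of $\ls^2$ through truncated moments to make this step rigorous.
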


To prove Theorem \ref{clt}, we need the following lemmas.
\begin{lemma}
\label{lm1}
Let $\{X_{ij}\}_{1\leq j\leq i}^\infty$ be the double array,
where for each row, $\{X_{ni}\}_{i=1}^n$ is independent sequence under $\EE$.
We assume that
$$\frac{1}{\sqrt{n}}\sum_{i=1}^n(\EE[|X_{ni}|]+\EE[-X_{ni}])\rightarrow 0,$$
and
$$\frac{1}{n^{3/2}}\sum_{i=1}^n\EE[|X_{ni}|^3]\rightarrow 0.$$
Define $\os^*=\sup_{i,j}\EE[X_{ij}^2]$ and $\ls^*=\inf_{i,j}-\EE[-X_{ij}^2]$, we further assume that $\os<+\infty$.
Then we have
$$-\EE[-\vp(\xi^*)]\leq\liminf_{n\rightarrow\infty}\EE[\vp(\frac{S_n}{\sqrt{n}})]\leq\limsup_{n\rightarrow\infty}\EE[\vp(\frac{S_n}{\sqrt{n}})]\leq\EE[\vp(\xi^*)],$$
where $\xi$ is a $G$-normally distributed random variable with $\xi^*\sim\mathcal{N}(0,[{\ls^*}^2,{\os^*}^2])$.
\end{lemma}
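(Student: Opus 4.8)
The plan is to establish the one-sided bound $\limsup_{n\to\infty}\EE[\vp(\tfrac{S_n}{\sqrt n})]\le\EE[\vp(\xi^*)]$, where $S_k:=\sum_{i=1}^k X_{ni}$ (with $S_0=0$) are the partial sums of the $n$-th row; the companion lower bound then follows automatically by applying this to $-\vp$, since $-\EE[-\vp(\xi^*)]$ is exactly the conjugate lower $G$-expectation. By a mollification argument it suffices to treat $\vp\in C_b^3(\br)$: I would approximate a general $\vp\in\cbl$ uniformly by smooth $\vp_\eta$, prove the estimate for fixed $\eta$, and send $\eta\to0$ last, so that the blow-up of $\|\vp_\eta'''\|$ is harmless because $n\to\infty$ is taken first. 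Set $u(t,x):=\EE[\vp(x+\sqrt t\,\xi^*)]$, which by the definition of the $G$-normal law solves $\p_t u=G(\p^2_{xx}u)$ with $u|_{t=0}=\vp$, where $G(a)=\tfrac12({\os^*}^2 a^+-{\ls^*}^2 a^-)$, and $u(1,0)=\EE[\vp(\xi^*)]$; the assumption $\os^*<\infty$ guarantees all relevant variances are bounded.

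The heart is a Lindeberg-type one-step estimate combined with Peng's telescoping. Write $\dl=1/n$ and $t_k=k\dl$. Using the row-wise independence under $\EE$ (Definition \ref{PI}) to peel off the last variable, for each $k$,
\begin{align*}
\EE\lt[u\lt(1-t_{k+1},\tfrac{S_{k+1}}{\sqrt n}\rt)\rt]=\EE\lt[\lt.\EE\lt[u\lt(1-t_{k+1},\tfrac{x+X_{n,k+1}}{\sqrt n}\rt)\rt]\rt|_{x=S_k}\rt].
\end{align*}
For fixed $x$ I Taylor-expand $u$ about $(1-t_{k+1},x/\sqrt n)$ to second order in space and first order in time, then invoke $\p_t u=G(\p^2_{xx}u)$. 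Taking $\EE$ over $X_{n,k+1}$ and using $\EE[X_{n,k+1}^2]\le{\os^*}^2$ and $-\EE[-X_{n,k+1}^2]\ge{\ls^*}^2$ together with the very definition of $G$, the combined second-order and time contribution satisfies $\EE[\tfrac12\p^2_{xx}u\,X_{n,k+1}^2/n]-\dl\,\p_t u\le0$. This is the crucial \emph{sign control}, and it holds on both $\{\p^2_{xx}u\ge0\}$ (where the sublinear expectation selects ${\os^*}^2$) and $\{\p^2_{xx}u<0\}$ (where the negative coefficient selects $-\EE[-X^2]\ge{\ls^*}^2$). Consequently
\begin{align*}
\EE\lt[u\lt(1-t_{k+1},\tfrac{x+X_{n,k+1}}{\sqrt n}\rt)\rt]\le u\lt(1-t_k,\tfrac{x}{\sqrt n}\rt)+e_k,
\end{align*}
where $e_k\le\tfrac{C}{\sqrt n}(\EE[|X_{n,k+1}|]+\EE[-X_{n,k+1}])+\tfrac{C}{n^{3/2}}\EE[|X_{n,k+1}|^3]+\tfrac{C}{n^2}$ is uniform in $x$, with $C$ depending only on the sup-norms of $\p_x u,\p^2_{xx}u,\p^3_{xxx}u,\p_{tt}u$.

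Feeding this back and using monotonicity and sub-additivity of $\EE$, I iterate from $k=0$ to $k=n-1$ to obtain
\begin{align*}
\EE\lt[\vp\lt(\tfrac{S_n}{\sqrt n}\rt)\rt]=\EE\lt[u\lt(0,\tfrac{S_n}{\sqrt n}\rt)\rt]\le u(1,0)+\sum_{k=0}^{n-1}e_k.
\end{align*}
The first hypothesis $\tfrac1{\sqrt n}\sum_i(\EE[|X_{ni}|]+\EE[-X_{ni}])\to0$ kills the first-order terms, the second hypothesis $\tfrac1{n^{3/2}}\sum_i\EE[|X_{ni}|^3]\to0$ kills the remainders, and $\sum_k C/n^2=O(1/n)$, so $\sum_k e_k\to0$ and the upper bound follows.

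The main obstacle is regularity of $u$: the Taylor step requires $u\in C^{1,2}$ with bounded $\p_x u,\p^2_{xx}u,\p^3_{xxx}u$ uniformly up to $t=0$. When ${\ls^*}>0$ the equation is uniformly parabolic and, for $\vp\in C_b^3$, standard interior estimates supply these bounds. The degenerate case ${\ls^*}=0$ is the delicate point: I would regularize by replacing ${\ls^*}^2$ with ${\ls^*}^2+\kappa$, run the argument for the nondegenerate solution $u_\kappa$ (absorbing an additional error of order $\kappa$ per unit time arising from the mismatch between $-\EE[-X_{ni}^2]\ge{\ls^*}^2$ and ${\ls^*}^2+\kappa$ in the sign control), then take $n\to\infty$ followed by $\kappa\to0$, using continuity of $\mu\mapsto\EE[\vp(\xi)]$ for $\xi\sim\cn(0,[\mu,{\os^*}^2])$. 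Apart from this regularization bookkeeping, every error term is already matched to the moment conditions built into the statement.
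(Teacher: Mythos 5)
The paper gives no proof of Lemma \ref{lm1} at all --- it defers to the $m$-dependent CLT in Li \cite{li} --- and your proposal reconstructs precisely that route: Peng's PDE/Lindeberg scheme, solving $\p_tu=G(\p^2_{xx}u)$ with data $\vp$, peeling off one summand per step via Definition \ref{PI}, Taylor-expanding, using $\EE[X_{ni}^2]\le{\os^*}^2$ and $-\EE[-X_{ni}^2]\ge{\ls^*}^2$ together with the definition of $G$ to make the second-order-plus-time contribution nonpositive, and matching the two hypotheses to the first- and third-order errors. The reduction of the lower bound to the upper bound via $-\vp$, the telescoping, and the $\kappa$-regularization of the degenerate case ${\ls^*}=0$ are all exactly as in the cited argument, so in structure you are doing the same proof the paper points to.

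The one step that does not hold as written is the regularity claim: mollifying $\vp$ into $C^3_b$ does \emph{not} yield a bounded $\p^3_{xxx}u$ (nor a bounded $\p_{tt}u$). Since $G(a)=\frac12({\os^*}^2a^+-{\ls^*}^2a^-)$ has a kink at $a=0$, the equation does not propagate third-order regularity even in the uniformly parabolic case: Evans--Krylov/Krylov--Safonov give only $u\in C^{1+\alpha/2,2+\alpha}$ for some $\alpha\in(0,1)$ depending on the ratio ${\ls^*}/{\os^*}$, and differentiating the equation once produces a linear equation whose coefficient $G'(\p^2_{xx}u)$ jumps across the set $\{\p^2_{xx}u=0\}$. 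Consequently your per-step remainder must be estimated through the H\"older moduli of $\p_tu$ and $\p^2_{xx}u$, giving $O(\dl^{1+\alpha/2})+O\bigl(\min\bigl(|X_{ni}/\sqrt n|^2,\,|X_{ni}/\sqrt n|^{2+\alpha}\bigr)\bigr)$ rather than $O(|X_{ni}|^3n^{-3/2})$. This still closes, but needs one extra splitting that your write-up omits: for $|y|\le\ve$ bound $|y|^{2+\alpha}\le\ve^{\alpha}|y|^2$ and use that $\sum_i\EE[X_{ni}^2]/n\le{\os^*}^2$ stays bounded, while for $|y|>\ve$ bound $|y|^2\le\ve^{-1}|y|^3$ and invoke the third-moment hypothesis; then send $n\to\infty$ before $\ve\to0$. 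With that standard replacement (which is how Li \cite{li} and Peng \cite{pengbook} actually run the estimate), your argument is complete.
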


The proof is very similar to the corresponding one in Li \cite{li}, so we omit the proof here.

\begin{lemma}
\label{lm2}
Let $\{X_i\}_{i=1}^\infty$ be an independent sequence under $\EE$ satisfying (H2) and $\EE[X_i]=\EE[-X_i]=0, \forall i\in\mathbb{N}$. Define $\os^*=\sup_{i}\EE[X_{i}^2]$ and $\ls^*=\inf_{i}-\EE[-X_{i}^2]$, we further assume that $\os<+\infty$.
Then we have
$$-\EE[-\vp(\xi^*)]\leq\liminf_{n\rightarrow\infty}\EE[\vp(\frac{S_n}{\sqrt{n}})]\leq\limsup_{n\rightarrow\infty}\EE[\vp(\frac{S_n}{\sqrt{n}})]\leq\EE[\vp(\xi^*)],$$
where $\xi$ is a $G$-normally distributed random variable with $\xi^*\sim\mathcal{N}(0,[{\ls^*}^2,{\os^*}^2])$.
\end{lemma}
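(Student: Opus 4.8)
The plan is to deduce Lemma~\ref{lm2} from the double-array central limit theorem, Lemma~\ref{lm1}, by truncating at the natural CLT scale $\sqrt{n}$. For each $n$ I set $Y_{n,i}=(-\sqrt{n})\vee(X_i\wedge\sqrt{n})$ and $T_n=\sum_{i=1}^n Y_{n,i}$. Since $x\mapsto(-\sqrt{n})\vee(x\wedge\sqrt{n})$ is bounded and Lipschitz, $\{Y_{n,i}\}_{i=1}^n$ is again independent under $\EE$ for each fixed $n$ (a Lipschitz map applied coordinatewise preserves independence, whereas a sharp indicator cut-off would not), so $\{Y_{n,i}\}$ is a row-independent triangular array to which Lemma~\ref{lm1} applies. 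The proof then splits into three tasks: showing the truncation is asymptotically negligible, verifying the two moment hypotheses of Lemma~\ref{lm1} from (H2), and matching the variance parameters of the array to $\os^*$ and $\ls^*$.

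First, the truncation error. Because $Y_{n,i}=X_i$ on $\{|X_i|\le\sqrt{n}\}$, we have $\{S_n\neq T_n\}\subset\bigcup_{i=1}^n\{|X_i|>\sqrt{n}\}$, so by (H2)
\[
V(S_n\neq T_n)\le\sum_{i=1}^n V(|X_i|>\sqrt{n})\le nK\,V(X^2\ge n)\longrightarrow 0.
\]
Since $|\vp(a)-\vp(b)|\le 2C_\vp\mathbf{I}_{\{a\neq b\}}$ for bounded $\vp$, it follows that $|\EE[\vp(S_n/\sqrt{n})]-\EE[\vp(T_n/\sqrt{n})]|\le 2C_\vp\,V(S_n\neq T_n)\to 0$, so $S_n/\sqrt{n}$ and $T_n/\sqrt{n}$ share the same $\liminf$ and $\limsup$, and it suffices to prove the two-sided bound for $T_n$.

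Next, the hypotheses of Lemma~\ref{lm1}. For the Lyapunov (third-moment) term, the layer-cake formula together with the domination in (H2) gives $\EE[|Y_{n,i}|^3]\le 3\int_0^{\sqrt{n}}u^2 V(|X_i|>u)\,du\le 3K\int_0^{\sqrt{n}}u^2 V(|X|>u)\,du$, whence
\[
\frac{1}{n^{3/2}}\sum_{i=1}^n\EE[|Y_{n,i}|^3]\le\frac{3K}{\sqrt{n}}\int_0^{\sqrt{n}}u^2 V(|X|>u)\,du.
\]
As $u^2 V(|X|>u)=u^2 V(X^2>u^2)\to 0$ by (H2), the right-hand side is a Cesàro mean of a null function and tends to $0$. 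For the drift term, $\EE[X_i]=\EE[-X_i]=0$ forces $|\EE[Y_{n,i}]|\vee|\EE[-Y_{n,i}]|\le\EE[|X_i|\mathbf{I}_{\{|X_i|>\sqrt{n}\}}]$, and another layer-cake estimate with (H2) shows $\tfrac{1}{\sqrt{n}}\sum_i\EE[|X_i|\mathbf{I}_{\{|X_i|>\sqrt{n}\}}]\to 0$; this is precisely the first hypothesis of Lemma~\ref{lm1}, controlling the mean uncertainty created by the cut-off.

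Finally, the variance matching and conclusion. Truncation only shrinks the second moment, so $\EE[Y_{n,i}^2]\uparrow\EE[X_i^2]$ and $-\EE[-Y_{n,i}^2]\uparrow-\EE[-X_i^2]$ as $n\to\infty$; taking the supremum (resp.\ infimum) over the array recovers $\os^*$ (resp.\ $\ls^*$) in the limit. Applying Lemma~\ref{lm1} to $\{Y_{n,i}\}$ and letting the parameters pass to their limits, using the monotone dependence of $\EE[\vp(\xi)]$ on the variance interval of a $G$-normal $\xi$, yields the two-sided bound with $\xi^*\sim\mathcal{N}(0,[{\ls^*}^2,{\os^*}^2])$, which transfers back to $S_n/\sqrt{n}$ by the first step. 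I expect the \emph{main difficulty} to be exactly this last matching: since truncation lowers the lower variance $-\EE[-Y_{n,i}^2]$ as well as the upper one, a naive application of Lemma~\ref{lm1} produces a $G$-normal on a strictly wider interval, and one must run the argument through a secondary limit (or re-truncate level by level) to pin the interval down to $[{\ls^*}^2,{\os^*}^2]$. This delicate passage, like the sharp Lyapunov estimate above obtained under only the weak tail condition (H2), follows the scheme of Zhang~\cite{zhang2021} and Li~\cite{li}.
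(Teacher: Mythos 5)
Your proof follows the paper's own argument: the paper likewise truncates at level $\sqrt{n}$, setting $X_{ni}=(-\sqrt{n})\vee X_i\wedge\sqrt{n}$, applies Lemma \ref{lm1} to the resulting row-independent array (deferring the verification of its hypotheses to ``the same argument in Zhang \cite{zhang2021}''), and removes the truncation via $\EE[|\vp(\sum_{i=1}^n X_{ni}/\sqrt{n})-\vp(S_n/\sqrt{n})|]\le\sup_x|\vp(x)|\sum_{i=1}^n V(|X_i|\ge\sqrt{n})\le KnV(|X|\ge\sqrt{n})\to 0$. The layer-cake estimates and the variance-matching limit you spell out are precisely the details the paper leaves to that citation, so the two proofs coincide in substance.
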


\begin{proof}
Let $X_{ni}=(-\sqrt{n})\vee X_i\wedge\sqrt{n}$, by the same argument in Zhang \cite{zhang2021}, $\{X_{ni}\}$ satisfies the conditions in Lemma \ref{lm1}, we have
$$\limsup_{n\rightarrow\infty}\EE[\vp(\frac{\sum_{i=1}^n X_{ni}}{\sqrt{n}})]\leq\EE[\vp(\xi^*)].$$
It is clear that
$$\EE[|\vp(\frac{\sum_{i=1}^n X_{ni}}{\sqrt{n}})-\vp(\frac{S_n}{\sqrt{n}})|]\leq\sup_{x}|\vp(x)|\sum_{i=1}^nV(|X_i|\geq\sqrt{n})\leq KnV(|X|\geq\sqrt{n})\rightarrow 0,$$
we can complete the proof.
\end{proof}

Now we give the proof of Theorem \ref{clt}.

\begin{proof}[Proof of Theorem \ref{clt}]
By Proposition \ref{prop4}, we can enlarge $\cp$ to $\bar{\cp}$ such that $\{X_i\}_{i=1}^\infty$ is independent under $\EEP$ with $\EEP[\pm X_i^p]=\EE[\pm X_i^p]$, $p=1,2$. We further assume that $\ls>0$. In fact, this condition can be removed by the perturbation method in
the proof of Theorem 2.4.7 in Peng \cite{pengbook}.

For each $0<\ve<\ls$, we can choose $N$ such that $\EEP[X_i^2]\leq \os^2+\ve$ and $-\EEP[-X_i^2]\geq \ls^2-\ve$, when $i\geq N$. Set $S_{n}^N=\sum_{i=N}^nX_i$, for $n\geq N$, by Lemma \ref{lm2}, we have
$$\limsup_{n\rightarrow\infty}\EEP[\vp(\frac{S_n^N}{\sqrt{n-N}})]\leq\EEP[\vp(\xi_\ve)],$$
where $\xi_\ve\sim\mathcal{N}(0,[\ls^2-\ve,\os^2+\ve])$.

It is easily seen that
\begin{align*}
|\EEP[\vp(\frac{S_n}{\sqrt{n}})-\vp(\frac{S_n^N}{n-N})]|&\leq\EEP[|\vp(\frac{S_n}{\sqrt{n}})-\vp(\frac{S_n^N}{\sqrt{n}})|]+\EEP[|\vp(\frac{S_n^N}{\sqrt{n}})-\vp(\frac{S_n^N}{\sqrt{n-N}})|]\\
&\leq C\frac{\sum_{i=1}^N\EEP[|X_i|]}{\sqrt{n}}+C\frac{N\sum_{i=N}^n\EEP[|X_i|]}{\sqrt{n(n-N)(\sqrt{n}+\sqrt{n+N})}}\rightarrow 0,
\end{align*}
Thus we obtain
$$\limsup_{n\rightarrow\infty}\EEP[\vp(\frac{S_n}{\sqrt{n}})]\leq\EE[\vp(\xi_\ve)].$$
Note that $\EE[\vp(\frac{S_n}{\sqrt{n}})]\leq\EEP[\vp(\frac{S_n}{\sqrt{n}})]$, and $\EE[\vp(\xi_\ve)]\rightarrow\EE[\vp(\xi)]$ as $\ve\rightarrow 0$, the proof is complete.
\end{proof}

\section{Example}

 In this section, we will present relevant counterexamples to illustrate that the LLN and CLT for i.i.d sequence may fail only with $\EE[|X_1|]<\infty$ and $\EE[X_1^2]<\infty$ respectively.

\begin{example}
\label{exam}
Let $\Omega = \mathbb{Z}$, $\mathcal{F}=\mathcal{B}(\mathbb{Z})$, $\mathcal{P}=\{P_{k}, k \geq 1\}$, where $P_{k}(\{0\})=1-\frac{1}{k^2}$, $P_{k}(\{k\})=P_{k}(\{-k\})=\frac{1}{2k^2}$. Consider a function $X$ on $\mathbb{Z}$ defined by $X(n)=n,n \in \mathbb{Z}$. It is obvious that $\EE[X]=\EE[-X]=0$ and  $\EE[X^{2}]=-\EE[-X^{2}]=1$. According to Proposition \ref{prop4} or Peng \cite{pengbook}, we are able to construct an i.i.d. sequence $\{X_{n}\}_{n=1}^\infty$ under $\EE$ such that $X_{n}$ has the same distribution with $X$.
\end{example}

We have the following properties for such example.

\begin{proposition}
\label{prop3}
In Example \ref{exam}, let $Y_i=X_i^2$ and $\vp(x)=1-x$, then
$$\lim_{n\rightarrow\infty}\EE[\vp(\frac{Y_1+\cdots+Y_n}{n})]=1.$$
\end{proposition}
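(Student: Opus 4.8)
The plan is to prove the sharper fact that $\EE[\vp(\frac{Y_1+\cdots+Y_n}{n})]=1$ for \emph{every} $n$, so that the stated limit follows at once. Put $Z_n:=\frac1n\sum_{i=1}^n X_i^2=\frac1n\sum_{i=1}^n Y_i$ and note that $\vp(Z_n)=1-Z_n$. Since each $Y_i=X_i^2\ge 0$ we have $Z_n\ge 0$, hence $1-Z_n\le 1$ everywhere; by monotonicity and constant preservation of the sublinear expectation this gives the upper bound $\EE[\vp(Z_n)]\le 1$ immediately.

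For the matching lower bound I would exhibit a single measure in the representing set of the sequence under which $Z_n\equiv 0$. Recall from Proposition \ref{prop4} that the i.i.d. sequence is carried by $\bar{\cp}=\{P:E_P[\psi(X_m)\mid\F_{m-1}]\le\EE[\psi(X)]\ \text{for all}\ \psi\in C_{b,Lip}(\br)\ \text{and all}\ m\}$. I claim the degenerate law $\delta_0^{\otimes\infty}$, under which $X_m\equiv 0$ for every $m$, belongs to $\bar{\cp}$. Indeed, under $\delta_0^{\otimes\infty}$ one has $E_P[\psi(X_m)\mid\F_{m-1}]=\psi(0)$, and because $E_{P_k}[\psi]=(1-k^{-2})\psi(0)+\frac{1}{2k^2}(\psi(k)+\psi(-k))\to\psi(0)$ as $k\to\infty$, it follows that $\psi(0)\le\sup_k E_{P_k}[\psi]=\EE[\psi(X)]$ for each bounded Lipschitz $\psi$. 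Thus the defining constraint of $\bar{\cp}$ is met. Under this measure $Z_n=0$, so $\EE[\vp(Z_n)]\ge E_{\delta_0^{\otimes\infty}}[1-Z_n]=1$, and combined with the upper bound this yields $\EE[\vp(Z_n)]=1$ for all $n$.

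The step that needs care, and indeed the whole point of the example, is recognising that membership $\delta_0^{\otimes\infty}\in\bar{\cp}$ is legitimate even though $-\EE[-X^2]=1$ for the single variable relative to $\{P_k\}_{k\ge 1}$. Identical distribution in the sense of Definition \ref{d1} only fixes $\EE$ on bounded Lipschitz functions, and since $\delta_0$ is the weak limit of the $P_k$ while $X^2$ is unbounded, the lower second moment is not inherited by the sequence: enlarging to $\bar{\cp}$ admits the degenerate measure and collapses $-\EE[-X_i^2]$ from $1$ to $0$. Consequently the effective averaging interval for $\frac1n\sum X_i^2$ is $[0,1]$ and $\max_{0\le\mu\le 1}(1-\mu)=1$, in contrast with the value $\vp(1)=0$ that a naive application of Peng's LLN (using the marginal singleton $\{1\}=[-\EE[-X_1^2],\EE[X_1^2]]$) would suggest. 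I expect verifying that the construction tolerates this degenerate conditional law to be the only genuinely delicate point; the two bounds themselves are routine.
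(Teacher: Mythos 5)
Your proof is correct, and it takes a genuinely different route from the paper's. The paper peels off one coordinate at a time via the defining identity of independence, $\EE[\vp(\frac{Y_1+\cdots+Y_n}{n})]=\EE[\EE[\vp(\frac{x+Y_n}{n})]|_{x=Y_1+\cdots+Y_{n-1}}]$, and evaluates the inner expectation as $\sup_{k}\{(1-k^{-2})\vp(\frac{x}{n})+k^{-2}\vp(\frac{x+k^2}{n})\}$, claiming this equals $\vp(\frac{x}{n})$. You instead obtain the upper bound $\EE[\vp(Z_n)]\le 1$ for free from $Z_n\ge 0$, and the lower bound by exhibiting one measure in $\bar{\cp}$ under which every $X_m\equiv 0$. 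Your route buys something real here: because $\vp(t)=1-t$ is unbounded, the paper's supremum over $k\in\mathbb{N}$ is in fact constant in $k$ and equal to $\vp(\frac{x}{n})-\frac{1}{n}$ (the weight $k^{-2}$ exactly cancels the linear decay of $\vp$ at $\frac{x+k^2}{n}$), so the identity $\EE[\vp(\frac{x+Y_i}{n})]=\vp(\frac{x}{n})$ genuinely requires the representing set to contain the weak limit $\delta_0$ of the $P_k$, i.e.\ it requires working over the enlargement $\bar{\cp}$, where $-\EEP[-X_i^2]=0$ rather than $1$. That is precisely the mechanism you isolate and verify rigorously, via $E_{P_k}[\psi]\to\psi(0)$ for bounded Lipschitz $\psi$, so your membership check $\delta_0^{\otimes\infty}\in\bar{\cp}$ is not merely a ``delicate point'' but the actual content of the example; the paper's recursive computation displays the conditional identity at every $x$ but leaves this enlargement implicit, while your two-sided bound does not need that identity at all.
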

\begin{proof}
It is clear that,
\begin{align*}
\EE[\vp(\frac{x+Y_i}{n})]&=\sup_{k\in\mathbb{N}}\{(1-\frac{1}{k^2})\vp(\frac{x}{n})+\frac{1}{k^2}(\vp(\frac{x+k^2}{n})\}\\
&=\sup_{k\in\mathbb{N}}\{\vp(\frac{x}{n})+\frac{1}{k^2}(\vp(\frac{x+k^2}{n})-\vp(\frac{x}{n}))\}=\vp(\frac{x}{n}).
\end{align*}
Then
\begin{align*}
\EE[\vp(\frac{Y_1+\cdots+Y_n}{n})]&=\EE[\EE[\vp(\frac{x+Y_n}{n})]|_{x=Y_1+\cdots+Y_{n-1}}]\\
&=\EE[\vp(\frac{Y_1+\cdots+Y_{n-1}}{n})]=\cdots=\EE[\vp(\frac{Y_1}{n})]=\vp(0)=1.
\end{align*}
\end{proof}
Proposition \ref{prop3} indicates that the Peng's LLN (see Theorem \ref{PengLLN}) does not hold only with the finite moment condition. Indeed,
$\lim_{n\rightarrow\infty}\EE[\varphi(\frac{S_n}{\sqrt{n}})]=1\neq 0=\vp(1)$.

\begin{proposition}
\label{prop2}
In Example \ref{exam}, let $\varphi(x)=1-|x|$, then for all $n\in\mathbb{N}$,
\begin{align}\label{eq4}
\EE[\varphi(\frac{X_1+\cdots+X_n}{\sqrt{n}})]=1,
\end{align}
thus the central limit theorem does not hold.
\end{proposition}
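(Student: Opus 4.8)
The plan is to reproduce the mechanism behind Proposition \ref{prop3}: isolate a one-step invariance that the affine-in-$|\cdot|$ shape of $\vp$ enjoys under the sublinear operator attached to the distribution of $X$, and then propagate it through the i.i.d.\ structure. Since $\vp(y)=1-|y|\le 1$, monotonicity and constant preservation already give $\EE[\vp(S_n/\sqrt n)]\le 1$, so the entire task is to produce the matching lower bound, or rather the exact value $1$.

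The key step I would isolate is the following one-step computation. Fix $c>0$ and set $h(y)=1-c|y|$. For the distribution of $X$ in Example \ref{exam} one has, for every $x\in\br$,
$$\EE[h(x+X)]=\sup_{k\ge1}E_{P_k}[h(x+X)]=1-c\inf_{k\ge1}f_k(x),\qquad f_k(x):=\Big(1-\tfrac1{k^2}\Big)|x|+\tfrac1{2k^2}\big(|x+k|+|x-k|\big).$$
I would then show $\inf_{k\ge1}f_k(x)=|x|$ by splitting on the size of $k$: for integers $k\le|x|$ the two shifted terms have aligned signs, so $|x+k|+|x-k|=2|x|$ and $f_k(x)=|x|$ exactly; for $k>|x|$ one gets $|x+k|+|x-k|=2k$, hence $f_k(x)=|x|+(k-|x|)/k^2>|x|$, with the infimum value $|x|$ approached as $k\to\infty$. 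Consequently $\EE[h(x+X)]=1-c|x|=h(x)$, that is, the profile $1-c|\cdot|$ is left invariant by the one-step operator for every slope $c$.

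With this invariance in hand the conclusion follows exactly as in Proposition \ref{prop3}. Applying Definition \ref{PI} to the test function $(x_1,\dots,x_n)\mapsto\vp\big((x_1+\cdots+x_n)/\sqrt n\big)$ and using that $\{X_i\}$ is i.i.d.\ under $\EE$, I would peel off $X_n$:
$$\EE\Big[\vp\big(\tfrac{S_n}{\sqrt n}\big)\Big]=\EE\Big[\,\EE\big[\vp\big(\tfrac{x+X_n}{\sqrt n}\big)\big]\big|_{x=S_{n-1}}\Big]=\EE\Big[\vp\big(\tfrac{S_{n-1}}{\sqrt n}\big)\Big],$$
where the inner expectation equals $h(S_{n-1})=\vp(S_{n-1}/\sqrt n)$ by the invariance with $c=1/\sqrt n$. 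Iterating down to $X_1$ yields $\EE[\vp(S_n/\sqrt n)]=\EE[\vp(X_1/\sqrt n)]=h(0)=1$ for every $n$, which proves \eqref{eq4}. Since an i.i.d.\ sequence with $\EE[X_1^2]=-\EE[-X_1^2]=1$ would, if Peng's CLT (Theorem \ref{PengCLT}) applied under the mere second-moment condition, force the limit $\EE[\vp(\xi)]=1-\EE[|\xi|]<1$ with $\xi\sim\cn(0,[1,1])$ the standard normal, the constant value $1$ shows that the CLT fails here.

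The main obstacle is the one-step computation $\inf_{k\ge1}f_k(x)=|x|$: everything rests on the fact that the symmetric mean-preserving spreads carried by $P_k$ neither raise nor lower the value of $1-c|\cdot|$, so the case analysis in $k$ (including the non-attained infimum for $0<|x|<1$, where only $k>|x|$ occur) is where the argument must be pinned down. Once that invariance is secured, the propagation through independence and the handling of the linear (rather than bounded) growth of $\vp$ are routine, mirroring the treatment already used for Proposition \ref{prop3} and in the proof of Theorem \ref{PengLLN}.
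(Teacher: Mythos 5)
Your proposal is correct and follows essentially the same route as the paper: the identity $\inf_{k\ge 1}f_k(x)=|x|$ is exactly the paper's one-step computation $\sup_{k}\frac{2|x|-|x+k|-|x-k|}{2k^2}=0$, and the peeling-off of $X_n$ via Definition \ref{PI} is identical. Your case analysis in $k$ (in particular noting the infimum is only approached, not attained, when $0<|x|<1$) is in fact slightly more careful than the paper's.
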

\begin{proof}
We first observe that
\begin{align*}\EE[\vp(\frac{x+X}{\sqrt{n}})]&=\sup_{k\in\mathbb{N}}\{(1-\frac{1}{k^2})\vp(\frac{x}{\sqrt{n}})+\frac{1}{2k^2}(\vp(\frac{x-k}{\sqrt{n}})+\vp(\frac{x+k}{\sqrt{n}}))\}\\
&=\vp(\frac{x}{\sqrt{n}})+\frac{1}{\sqrt{n}}\sup_{k\in\mathbb{N}}\{\frac{2|x|-|x+k|-|x-k|}{2k^2}\}=\vp(\frac{x}{\sqrt{n}}).
\end{align*}
Then
\begin{align*}
\EE[\vp(\frac{S_n}{\sqrt{n}})]&=\EE[\EE[\vp(\frac{x+X_n}{\sqrt{n}})]|_{x=X_1+\cdots+X_{n-1}}]=\EE[\vp(\frac{X_1+\cdots+X_{n-1}}{\sqrt{n}})]\\
&=\cdots=\EE[\vp(\frac{X_1}{\sqrt{n}})]=\vp(0)=1.
\end{align*}

In this case,
$\lim_{n\rightarrow\infty}\EE[\varphi(\frac{S_n}{\sqrt{n}})]=1>\mathcal{N}(\vp)$, where $\mathcal{N}(\vp)$ is the expectation of $\vp(\xi)$ and $\xi$ is standard normal distributed. Peng's CLT (Theorem \ref{PengCLT}) is invalid.
\end{proof}

In the end of the paper, we raise the following questions as the open problems:

{\bf Questions} Do the limit distributions of $\frac{X_1+\cdots+X_n}{\sqrt{n}}$ and $\frac{X_1^2+\cdots+X_n^2}{n}$ in Example \ref{exam} (or in general case) exist? If they exist, what are the corresponding limit distributions?

\providecommand{\bysame}{\leavevmode\hbox to3em{\hrulefill}\thinspace}
\providecommand{\MR}{\relax\ifhmode\unskip\space\fi MR }
\providecommand{\MRhref}[2]{%
  \href{http://www.ams.org/mathscinet-getitem?mr=#1}{#2}
}
\providecommand{\href}[2]{#2}

\end{document}